\documentclass[11pt]{article}
\usepackage{amsmath,amsfonts,amsthm,amssymb, mathtools}
\usepackage{mathrsfs, graphicx,color,latexsym, tikz, calc,
}
\usepackage[colorlinks,bookmarksopen,bookmarksnumbered,citecolor=blue, linkcolor=red, urlcolor=blue]{hyperref}
\usepackage{enumitem}
\usepackage{authblk}
\usetikzlibrary{shadows}
\usetikzlibrary{patterns,arrows,decorations.pathreplacing}

\voffset -2cm
\makeatletter
\def\leftharpoonfill@{\arrowfill@\leftharpoonup\relbar\relbar}
\def\rightharpoonfill@{\arrowfill@\relbar\relbar\rightharpoonup}
\newcommand\rbjt{\mathpalette{\overarrow@\rightharpoonfill@}}
\newcommand\lbjt{\mathpalette{\overarrow@\leftharpoonfill@}}
\makeatother

\newtheorem{theorem}{Theorem}
\newtheorem{lemma}{Lemma}

\newtheorem{conjecture}{Conjecture}

\newtheorem{remark}{Remark}

\newtheorem{claim}{Claim}
\newtheorem{question}{Question}

\marginparwidth 0pt
\oddsidemargin 32pt
\evensidemargin 0pt
\topmargin 20pt
\textheight 21.5 truecm
\textwidth 14.5 truecm

\allowdisplaybreaks
\begin{document}

\title{\bf \Large  Oriented trees in digraphs with large girth}

\author{Junying Lu, Yaojun Chen\footnote{Corresponding author. yaojunc@nju.edu.cn.}}
\affil{{\small {School of Mathematics, Nanjing University, Nanjing 210093, China}}}
\date{ }

\maketitle

\begin{abstract}
The girth of a graph $G$ is the length of a shortest cycle of $G$. Jiang (JCT-B, 2001) showed that every graph $G$ with girth at least $2\ell+1$ and minimum degree at least $k/\ell$ contains every tree $T$ with $k$ edges whose maximum degree does not exceed the minimum degree of $G$. Let $\delta^0(D)$ be the minimum semidegree of a digraph $D$ and $\Delta(D)$ be the maximum degree of $D$. In this paper, we establish a digraph version of Jiang's result, stating that every oriented graph $D$ of girth at least $2\ell+1$ with $\delta^0(D)\ge \max\{k/\ell,\Delta(T)\}$ contains every oriented tree with $k$ edges, that answers a question raised by Stein and Trujillo-Negrete in affirmative.\\

\noindent {\it AMS classification:} 05C20\\[1mm]
\noindent {\it Keywords:} Directed graph; Oriented Tree; Degree condition
\end{abstract}

\baselineskip=0.202in

\section{Introduction}
A typical problem in extremal graph theory is the existence of a specific subgraph under the given degree condition. A fundamental result states that any graph $G$ with minimum degree $\delta(G)\ge k$ contains all trees with $k$ edges, provable by a simple greedy embedding. Although this bound on the minimum degree is generally the best possible, there is room for improvement if an additional condition is imposed on $G$, such as the requirements on the maximum degree, denoted by $\Delta(G)$, and the girth, defined as the length of the shortest cycle in $G$, and so on. On the other hand, considering the average degree, Erdős and Sós \cite{Erdos} proposed the following conjecture.
\begin{conjecture}[Erdős and S\'os \cite{Erdos}]\label{conj-ES}
 Every graph of order $n$ with more than $n(k-1)/2$ edges (or, equivalently, with average degree greater than $k-1$) contains every tree with $k$ edges as a subgraph.
\end{conjecture}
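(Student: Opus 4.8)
\bigskip
\noindent\emph{Proof strategy.} This is Conjecture~\ref{conj-ES}, the Erd\H{o}s--S\'os conjecture, which has been open since $1963$; what follows is therefore a plan of attack (and an indication of why it is hard) rather than a complete proof.

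\smallskip
\noindent\emph{A clean minimal counterexample.} I would fix $k$, induct on $k$, and take a counterexample $G$ on $n$ vertices with $e(G)>n(k-1)/2$ that omits some tree $T$ with $k$ edges, chosen to minimise $|V(G)|+|E(G)|$. If $G$ had a vertex $v$ with $\deg(v)\le (k-1)/2$, then $e(G-v)=e(G)-\deg(v)>(n-1)(k-1)/2$, so $G-v$ would be either no counterexample --- whence $T\subseteq G-v\subseteq G$ --- or a strictly smaller one; both possibilities contradict the choice of $G$. Hence $\delta(G)\ge\lceil k/2\rceil$; moreover $G$ may be taken connected, since some component already exceeds the density bound. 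The point of this cheap phase is to arrive at a graph with average degree exceeding $k-1$, minimum degree roughly $k/2$, and no remaining slack.

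\smallskip
\noindent\emph{The embedding phase.} Pick a leaf $\ell$ of $T$ adjacent to $x$ and set $T'=T-\ell$. The naive plan is that $G$, having more edges than the Erd\H{o}s--S\'os threshold for $k-1$, contains $T'$ by induction, and that the copy of $T'$ can be chosen so that $x$ lands on a vertex still possessing an unused neighbour, into which $\ell$ is placed. Making this rigorous requires a \emph{flexible} embedding of $T'$ --- one perturbable so that a prescribed vertex of $T'$ maps onto a vertex of degree $\ge k-1$ --- or, more realistically, one must propagate a whole family of partial embeddings rather than a single one. Under extra hypotheses this becomes manageable: if $G$ has girth at least $5$ then every edge $uv$ has $N(u)\cap N(v)=\varnothing$, so $N(u)\cup N(v)$ is a locally tree-like set of $\deg(u)+\deg(v)\ge k$ vertices; splitting $T$ at a central edge $xy$ into rooted subtrees $T_x,T_y$, mapping $x\mapsto u$, $y\mapsto v$, and growing $T_x,T_y$ greedily into the disjoint neighbourhoods of $u$ and $v$ then works --- this is essentially the mechanism behind the girth-$\ge 2\ell+1$ embedding results in the spirit of Jiang's theorem recalled in the abstract, and a $C_4$-free host is similarly locally sparse enough to imitate the DFS structure of $T$. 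For the full conjecture with $k$ large, I would instead apply the Szemer\'edi regularity lemma to $G$, verify that the reduced graph $R$ inherits essentially the same density, and embed $T$ by routing its branch vertices through a few clusters of an appropriate substructure of $R$ --- a long ``connected matching'' when $T$ is path-like, a dense high-degree cluster when $T$ is star-like --- placing the leaves in the super-regular remainder and using that almost every vertex of a regular pair has almost full degree into the other side.

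\smallskip
\noindent\emph{Where it breaks.} The obstacle is entirely in the embedding phase. Without a girth or $C_4$-free assumption, $G$ may be (close to) a vertex-disjoint union of cliques $K_k$ --- which carries $k(k-1)/2$ edges on $k$ vertices yet contains \emph{no} tree with $k$ edges --- or (close to) a complete bipartite graph with one part of size $(k-1)/2$; these two extremal families sit essentially on the threshold, so a correct argument must recognise when $G$ is near one of them and then convert its minute edge surplus into a copy of $T$, while when $G$ is far from both it must extract a structure rich enough to host an \emph{arbitrary} $k$-edge tree --- long paths, large stars, brooms and everything between. Reconciling the single global density bound with the need for a locally controllable, shape-adapted, perturbable partial embedding is precisely the stability-plus-flexibility step that I expect to be the true bottleneck, and it is why the conjecture has resisted a general proof.
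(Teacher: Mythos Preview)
The paper does not prove Conjecture~\ref{conj-ES}; it is stated in the introduction purely as motivation, and the paper only quotes partial results (Brandt--Dobson, Sacl\'e--Wo\'zniak, Jiang) that verify it under extra girth-type hypotheses before moving on to its actual subject, a digraph analogue of Jiang's theorem. You correctly identify the statement as the open Erd\H{o}s--S\'os conjecture and present a strategy sketch rather than a proof, so in that sense your response is appropriate: there is simply no proof in the paper to compare your proposal against.

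That said, a couple of your remarks are worth tightening. The reduction to $\delta(G)\ge\lceil k/2\rceil$ is fine, but your girth-$5$ sketch (``split $T$ at a central edge and grow greedily into the disjoint neighbourhoods'') is not quite the Brandt--Dobson/Jiang mechanism: minimum degree $k/2$ on each side only guarantees neighbourhoods of size $k/2$, not enough to greedily embed a subtree that may itself have close to $k$ vertices; the real argument exploits the girth to bound how many \emph{already-embedded} vertices can lie near the current growth point (cf.\ Lemma~\ref{lem-1} and the $B(x)$-ball counting later in this paper). Also, your extremal obstruction ``a union of cliques $K_k$'' actually \emph{does} contain every tree with $k$ edges, since $K_k$ has $k$ vertices but a tree with $k$ edges has $k+1$; the tight example is a union of $K_{k-1}$'s (or $K_k$'s minus a perfect matching), which sits exactly at $n(k-1)/2$ edges and omits the $k$-edge path.
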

Brandt and Dobson \cite{Brandt} show that every graph of girth at least $5$, with $\delta(G)\ge k/2$ and $\Delta(G)\ge \Delta(T)$, contains each tree with $k$ edges. 
\begin{theorem}[Brandt and Dobson \cite{Brandt}]
Let $G$ be a graph of girth at least $5$ and $T$ be a tree with $k$ edges. If $\delta(G)\ge k/2$ and $\Delta(G)\ge \Delta(T)$, then $G$ contains $T$.     
\end{theorem}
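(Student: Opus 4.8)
\medskip
\noindent\textbf{Proof proposal.} Write $k$ for the number of edges of $T$. Since $\delta(G)\ge k/2$ and degrees are integers, $\delta(G)\ge\lceil k/2\rceil$; for $k\le 3$ the statement is immediate, so assume $k\ge 4$. Throughout I use that $\mathrm{girth}(G)\ge 5$ forbids triangles and $4$-cycles, so any two neighbours of a vertex $w$ have $w$ as their unique common neighbour; consequently the sets $N_G(a)\setminus\{w\}$ ($a\in N_G(w)$) are pairwise disjoint and disjoint from $N_G(w)$, whence $|B_2(w)|\ge 1+\delta(G)^2\ge k+1$, and every vertex of $G$ is adjacent to at most one neighbour of $w$. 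The proof splits on whether $\Delta(T)\le\delta(G)$: if so, $G$ has girth at least $2\cdot 2+1$, minimum degree at least $k/2$, and $\Delta(T)\le\delta(G)$, so the result of Jiang stated above (with $\ell=2$) gives $T\subseteq G$. So assume henceforth $\Delta(T)>\delta(G)$.

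Let $r\in V(T)$ have $\deg_T(r)=\Delta(T)=:D>\delta(G)\ge k/2$. A degree-sum count shows $r$ is the \emph{unique} vertex of $T$ of degree exceeding $k/2$: two such vertices would carry all $k$ edges of $T$ between them, making $T$ a double star, and the only double star with two vertices of degree $>k/2$ has maximum degree $(k+1)/2\le\delta(G)$, contradicting $\Delta(T)>\delta(G)$. Hence every vertex of $T$ except $r$ has degree at most $\lfloor k/2\rfloor\le\delta(G)$. Deleting $r$ leaves $D$ branches, whose sizes sum to $k$; as there are $D>k/2$ of them, at most $k-D<k/2$ branches are non-trivial and these span at most $2(k-D)<k$ vertices in total. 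Thus $T$ is a very large star centred at $r$ with a little extra structure hanging from a few leaves; and since the vertices other than $r$ have degree sum $2k-D<3k/2$, only $O(1)$ of them have degree as large as $k/4$.

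The embedding is built as follows. Root $T$ at $r$, put $\phi(r)=v$ with $\deg_G(v)=\Delta(G)\ge D$, and send the $D$ children of $r$ to $D$ distinct neighbours $u_1,\dots,u_D$ of $v$; by girth $5$ the $u_i$ form an independent set. Then extend over the non-trivial branches, processing the remaining vertices of $T$ in a global breadth-first order, and choosing the image of each child — when placing it into a free neighbour of its parent's image — so as to avoid being adjacent to the neighbourhood of any of the $O(1)$ heavy branch-vertices not yet reached. When the process reaches a vertex $x\ne r$ whose image is placed and must embed the children of $x$, it needs $\deg_T(x)-1$ free neighbours of $\phi(x)$; writing $\mu(x)$ for the number of already-embedded $z\ne\mathrm{parent}(x)$ with $\phi(z)\sim\phi(x)$, and using $\deg_G(\phi(x))\ge\delta(G)\ge\deg_T(x)$, the step goes through provided
\[
\mu(x)\ \le\ \deg_G(\phi(x))-\deg_T(x).
\]
Girth $5$ constrains $\mu(x)$: each contributing $z$ has $\mathrm{dist}_T(z,x)\ge 4$; its $T$-path to $x$ reaches $x$ through $\mathrm{parent}(x)$ and avoids $x$; for distinct such $z$ the neighbours-toward-$x$ have distinct images, all at $G$-distance exactly $2$ from $\phi(x)$; and at most one of $u_1,\dots,u_D$ is adjacent to $\phi(x)$, since two would close a $4$-cycle through $v$ — in particular $\mu(x)=0$ when $x$ is a child of $r$. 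Combined with the fact that all remaining contributing $z$ lie in the tiny non-trivial branches, that for ``light'' $x$ the right-hand side above is at least $\delta(G)-k/4\ge k/4$, far more than those few vertices can contribute, and that for ``heavy'' $x$ the avoidance rule in placing children has kept $N_G(\phi(x))$ nearly free, these should give the inequality at every step and finish the embedding.

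The main obstacle is precisely this bound $\mu(x)\le\deg_G(\phi(x))-\deg_T(x)$: the girth hypothesis is indispensable here, and the bookkeeping is most delicate, since one long-range edge at a vertex $x$ with $\deg_T(x)$ near $\delta(G)$ already blocks a step. I expect it requires (i) the avoidance rule, legitimate because each used vertex is adjacent to at most one further neighbour of any given vertex, so steering the images of children clear of $O(1)$ dangerous neighbourhoods costs only $O(1)$ of the available slots; (ii) a global count of the chords of the embedded tree — a girth-$5$ graph on $\le k+1$ vertices has only $O(k^{3/2})$ edges — to control the light vertices; and (iii) a rerouting fallback, in which an unavoidable collision at $\phi(x)$ is resolved by relocating a leaf or small subtree already lying in $N_G(\phi(x))$ into an unused part of $N_G(v)$ or into $B_2(\phi(x))$, which is large and, by disjointness of second neighbourhoods, almost untouched. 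The reduction to Jiang's result and the structural description of $T$ are routine; essentially all the work is in the bound on $\mu(x)$.
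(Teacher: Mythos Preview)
The paper does not prove this theorem at all: it is quoted as a background result of Brandt and Dobson \cite{Brandt} and is immediately followed by further cited results (Sacl\'e--Wo\'zniak, Jiang) without any argument. There is therefore nothing to compare your attempt against.

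On the attempt itself: the case $\Delta(T)\le\delta(G)$ is dispatched by invoking Jiang's Theorem~\ref{thm-J} with $\ell=2$. That is logically valid, but circular in spirit, since Jiang's result is a strict generalisation proved after (and motivated by) Brandt--Dobson; you are essentially citing the theorem you are asked to prove.

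In the remaining case $\Delta(T)>\delta(G)$ your structural reduction is correct --- there is a unique vertex $r$ of degree exceeding $k/2$, and every other vertex has degree at most $\delta(G)$ --- but the embedding argument is only a sketch, as you yourself flag. The crux is the inequality $\mu(x)\le\deg_G(\phi(x))-\deg_T(x)$, and the tools you list (an avoidance rule costing $O(1)$ slots, a global $O(k^{3/2})$ edge count, a rerouting fallback) are not shown to suffice. Concretely: at a depth-$2$ vertex $x$ with $\deg_T(x)$ close to $\delta(G)$, images of depth-$2$ vertices from \emph{other} branches can land in $N_G(\phi(x))$ (girth $5$ allows the resulting $5$-cycles), and there can be up to $k-D-1$ such branches; your $O(1)$-avoidance rule does not cover this, your global edge bound $O(k^{3/2})$ is far too weak to control a single vertex, and the rerouting fallback is asserted rather than proved. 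This is a genuine gap, not a detail to be filled in routinely.
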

It follows that Conjecture \ref{conj-ES} holds for graphs with girth at least $5$. Generalising this result, Sacl\'{e} and Wo\'{z}niak \cite{Sacle} proved Conjecture \ref{conj-ES} for graphs without 4-cycles. Actually, they showed a slightly stronger result.

\begin{theorem}[Sacl\'{e} and Wo\'{z}niak \cite{Sacle}]\label{thm-SW}
Let $G$ be a graph containing no $4$-cycle and $T$ be a tree with $k$ edges. If $\delta(G)\ge k/2$ and $\Delta(G)>\Delta(T)$ (or $\Delta(G)\ge k$ if $T$ is a star), then $G$ contains $T$.    
\end{theorem}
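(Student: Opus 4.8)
The plan is to induct on the number $k$ of edges of $T$, disposing of stars with the extra hypothesis and, for non-stars, peeling off a ``cherry'' at the end of a longest path. If $T=K_{1,k}$ is a star, a vertex of degree $\Delta(G)\ge k$ together with $k$ of its neighbours is a copy of $T$, so assume from now on that $T$ is not a star and that the statement holds for all trees with fewer than $k$ edges. Fix a longest path $x_0x_1\cdots x_p$ in $T$; since it is longest, every neighbour of $x_1$ other than $x_2$ is a leaf of $T$ (otherwise the path could be lengthened). Let $L$ be the set of these leaves and $s=|L|$; note $x_0\in L$, so $s\ge 1$, and $\deg_T(x_1)=s+1$. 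Put $T'=T-L$: a tree with $k-s$ edges in which $x_1$ is a leaf whose unique neighbour is $x_2$.

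Next I would embed $T'$ into $G$. If $T'$ is not a star this is the induction hypothesis, since $\delta(G)\ge k/2\ge (k-s)/2$ and $\Delta(G)>\Delta(T)\ge\Delta(T')$. If $T'$ is a star then $T$ is a double star, and I would embed it directly: place the centre of larger degree at a vertex of degree $\Delta(G)>\Delta(T)$, place its leaves on distinct neighbours, place the second centre on a further neighbour, and use that $G$ is $C_4$-free — any two vertices have at most one common neighbour — to place the remaining leaves without collision. In all cases call the resulting embedding $g$, chosen, among all valid embeddings of $T'$ into $G$, so as to maximise the number of unused neighbours of $g(x_1)$. It then remains to extend $g$ by sending the $s$ vertices of $L$ to $s$ distinct unused neighbours of $g(x_1)$, that is, to show that $g(x_1)$ has at least $s$ unused neighbours in $G$.

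This is the heart of the matter. Suppose $g(x_1)$ has fewer than $s$ unused neighbours; then more than $\deg_G(g(x_1))-s\ge k/2-s$ neighbours of $g(x_1)$ are used, and each such used neighbour is $g(a)$ for some $a\in V(T')\setminus\{x_1\}$. For any such $a$, the $T'$-path from $a$ to $x_1$ together with the edge $g(a)g(x_1)$ is a cycle of $G$ of length $d_{T'}(a,x_1)+1$, so $d_{T'}(a,x_1)\ne 3$; and since $x_1$ is a leaf of $T'$ we have $d_{T'}(a,x_1)=1+d_{T'}(a,x_2)$, while at most one used vertex $g(a)$ with $a$ adjacent to $x_2$ in $T'$ can be a neighbour of $g(x_1)$, because $g(x_1)$ and $g(x_2)$ have at most one common neighbour. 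I would push these restrictions, together with the extremal choice of $g$ (a genuinely unused neighbour of $g(x_1)$ could be used to re-route a branch of $T'$ off $g(x_2)$ unless it too is obstructed by a short cycle), far enough to confine the used neighbours of $g(x_1)$ to a region of $g(T')$ of size at most $k/2-s$, contradicting the displayed count.

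The step I expect to be the main obstacle is precisely this quantitative argument: turning ``$G$ is $C_4$-free'' into a bound strong enough to force that at most about $k/2-s$ of the roughly $k-s$ already-placed vertices can crowd a single neighbourhood, and similarly controlling the symmetric step that attaches $x_1$ to $g(x_2)$ inside the induction. The degenerate residual trees — double stars and brooms — are exactly where a pure minimum-degree hypothesis fails (there $G$ can be almost $(k/2)$-regular and miss $T$ by a single slot), so they must be isolated and handled by hand, and doing so is what forces the hypotheses $\Delta(G)>\Delta(T)$ and, for stars, $\Delta(G)\ge k$. If the direct extremal argument turns out to be awkward, a safe fallback for the write-up is to run the whole proof as a minimal counterexample and exploit edge-minimality of $G$ when doing the counting.
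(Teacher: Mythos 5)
Note first that the paper does not prove Theorem~\ref{thm-SW}: it is cited verbatim from Sacl\'e and Wo\'zniak~\cite{Sacle} and used as motivation, so there is no ``paper's own proof'' to compare against. I will therefore assess the proposal on its own terms.

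The overall framework --- induct on $k$, handle stars with the $\Delta(G)\ge k$ clause, and for non-stars peel off the $s$ leaves pendant at $x_1$ (the penultimate vertex of a longest path), embed $T'=T-L$ by induction, then extend by finding $s$ free neighbours of $g(x_1)$ --- is a sensible plan, and the bookkeeping around degenerate cases (when $T'$ is a star $T$ is a double star, and $\Delta(T)\ge k-s$ so the induction hypothesis still applies) is essentially right. However, the heart of the argument --- showing that at most roughly $k/2-s$ of the $k-s$ already-placed vertices can be neighbours of $g(x_1)$ --- is left as a wish, and you candidly flag it as the obstacle. The constraints you actually extract from $C_4$-freeness are far too weak to close this gap: from ``$g(a)g(x_1)$ closes a cycle of length $d_{T'}(a,x_1)+1$'' you only get $d_{T'}(a,x_1)\ne 3$ (longer cycles are not forbidden), and from ``$g(x_1),g(x_2)$ have at most one common neighbour'' you exclude at most one vertex at distance $2$. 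Vertices of $T'$ at distance $1,4,5,6,\dots$ from $x_1$ remain unconstrained, so nothing prevents $\Omega(k-s)$ of them being used neighbours. The additional pairwise constraint you could have used --- two used neighbours $g(a),g(b)$ cannot be at $T'$-distance exactly $2$, else $g(a),g(c),g(b),g(x_1)$ would be a $C_4$ --- still only caps the used set at roughly $(k-s)/2$, which is $k/2-s/2$, not the $k/2-s$ you need once $s\ge 2$. The promised rescue via an extremal choice of $g$ and ``re-routing a branch of $T'$'' is not an argument, just a direction, and the appeal to ``edge-minimality of $G$'' at the end is a fallback you never develop. In short, the decomposition is fine but the quantitative step --- the only place where both $\delta(G)\ge k/2$ and $C_4$-freeness must actually bite simultaneously --- is missing, and without it the proof does not go through.

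Two smaller remarks. First, the hypothesis $\Delta(G)>\Delta(T)$ is not merely there to seed the embedding; in Sacl\'e--Wo\'zniak it plays a structural role throughout (their proof is not a one-vertex-at-a-time greedy), so a correct proof will likely need to thread $\Delta(G)>\Delta(T)$ through the induction rather than invoke it only for double stars. Second, optimizing $g$ ``among all valid embeddings'' to maximize free neighbours of $g(x_1)$ is legitimate in principle, but to exploit it you would have to say precisely what a swap buys and why an obstruction to swapping forces a short cycle; as written this is the same gap in different clothing.
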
 
Dobson \cite{Dobson} further conjectured that any graph of girth at least $2\ell+1$ with $\delta(G)\ge k/\ell$ and $\delta(G)\ge \Delta(T)$ contains each tree with $k$ edges. This conjecture was subsequently confirmed by Jiang \cite{Jiang} after prior work by Haxell and \L uczak \cite{Haxell}.
\begin{theorem}[Jiang \cite{Jiang}]\label{thm-J}
Let $G$ be a graph with girth at least $2\ell+1$ and $T$ be a tree with at most $k$ edges. If $\delta(G)\ge \max\{k/\ell,\Delta(T)\}$, then $G$ contains $T$ as a subgraph. 
\end{theorem}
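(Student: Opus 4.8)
\medskip
\noindent\textbf{A plan for proving Theorem~\ref{thm-J}.}\quad
Write $d=\delta(G)$. We may assume $\ell\ge 2$ (for $\ell=1$ the hypothesis is $d\ge k$ and the classical one-vertex-at-a-time greedy embedding works) and $d\ge 2$ (else $\Delta(T)\le 1$ and $T$ is trivial); appending a pendant path to a leaf of $T$ changes neither $k$ nor the truth of $d\ge\max\{k/\ell,\Delta(T)\}$ and only makes the problem harder, so we may assume $e(T)=k$. The structural input is a \emph{girth consequence}: if $\psi$ embeds a subtree $T_0\subseteq T$ into $G$ and $\psi(x)\psi(y)\in E(G)$ while $xy\notin E(T_0)$, then $\operatorname{dist}_{T_0}(x,y)\ge 2\ell$, as otherwise the $T_0$-path from $x$ to $y$ plus the edge $\psi(x)\psi(y)$ is a cycle of $G$ of length $\le 2\ell$; call such a pair an \emph{accidental adjacency}. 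If $\operatorname{diam}(T)\le 2\ell-1$ there are none, and the greedy embedding along any search order goes through, since when a child $v$ of an embedded $u$ is placed the used neighbours of $\psi(u)$ are exactly the images of the $\le\Delta(T)-1\le d-1$ embedded tree-neighbours of $u$. So assume $\operatorname{diam}(T)\ge 2\ell$, hence $k\ge 2\ell$.

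\smallskip
Next, a \emph{path} of $T$ always embeds with no choices: fix a longest path $P=v_0v_1\cdots v_p$ and embed it greedily, sending $v_i$ to an unused neighbour of $\psi(v_{i-1})$. If this fails at stage $i$ then $N_G(\psi(v_{i-1}))\subseteq\psi(\{v_0,\dots,v_{i-2}\})$, so by the girth consequence every neighbour of $\psi(v_{i-1})$ other than $\psi(v_{i-2})$ is some $\psi(v_j)$ with $(i-1)-j\ge 2\ell$; with $j^{\ast}$ least among these indices we get a cycle $C=\psi(v_{j^{\ast}})\psi(v_{j^{\ast}+1})\cdots\psi(v_{i-1})\psi(v_{j^{\ast}})$ with all $\ge d$ neighbours of $\psi(v_{i-1})$ on it. Two of these neighbours at cyclic distance $\le 2\ell-2$ would close a cycle of length $\le 2\ell$ through $\psi(v_{i-1})$, so they are pairwise at cyclic distance $\ge 2\ell-1$ on $C$, forcing $|C|\ge d(2\ell-1)$; but $|C|\le i\le p\le e(T)=k$, so $d(2\ell-1)\le k$, which with $d\ge k/\ell$ gives $2\ell-1\le\ell$ --- impossible. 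Hence $P$ embeds, and the same reasoning shows that after any subtree is embedded one may always grow the image by appending an edge at any leaf of the current image-tree.

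\smallskip
It remains to attach the branches hanging off $P$, then their branches, and so on --- embedding each along one of its own longest paths. By the above, the process can only stall when a new child is added to a vertex $u$ already having $j\ge 1$ embedded tree-neighbours. If it stalls there, $N_G(\psi(u))$ lies in the current image and contains at most $j$ real tree-neighbours of $u$, hence $\ge d-j$ accidental neighbours $\psi(w_1),\dots$; since $\operatorname{dist}_T(u,w_i)\ge 2\ell$ and $\operatorname{dist}_T(w_a,w_b)\ge 2\ell-1$, the closed ball of radius $\ell$ about $u$ and the closed balls of radius $\ell-1$ about the $w_i$ are pairwise-disjoint subtrees of $T$, and counting their vertices (recalling $u$ still has an unembedded child, so $\deg_T(u)\ge j+1$) together with $d\ge k/\ell$ forces $j\ge 2$. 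So a stall can only occur while adding a non-first child to a vertex of degree $\ge 3$. These I would eliminate using the rigidity of a longest path (a branch rooted at $v_i$ has depth $\le\min\{i,p-i\}$, so ``deep'' interfering subtrees sit near one end of $P$) to re-run the ball-packing recursively, and, where that is insufficient, by rerouting an offending pendant subtree $T_{w_i}$ to liberate a neighbour of $\psi(u)$ while keeping a monotone potential (such as the number of accidental adjacencies) under control.

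\smallskip
The step I expect to be the genuine obstacle is this last one, concentrated at a vertex $u$ of maximum degree $\Delta(T)$ whose image has degree exactly $d=\Delta(T)$: there the budget at $\psi(u)$ is spent entirely on the $\Delta(T)$ tree-edges, so the embedding must be built with \emph{no} accidental adjacency at $\psi(u)$ at all. Organising the construction so that maximum-degree vertices stay ``clean'' is where $d\ge\Delta(T)$ is used to the last unit, and the bookkeeping is delicate --- when the induction removes a pendant spider and re-embeds the smaller tree, cleanness at a designated leaf must be transferred to the larger tree without being lost by one unit to a common neighbour forced by a short $(2\ell+1)$-cycle. Isolating the correct strengthened statement (roughly: $T$ embeds with a prescribed leaf having clean image) and proving it is the crux around which the whole argument is organised.
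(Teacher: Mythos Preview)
Your plan is candid about where it stalls, and that gap is real: the last two paragraphs are not a proof but a hope. The ball-packing that yields $j\ge 2$ is correct, but it does not recur usefully---at a vertex with $j=2$ the same count is already tight, and nothing forces the deep branches you would need for a sharper bound. The ``rerouting an offending $T_{w_i}$ to free a neighbour of $\psi(u)$'' idea has no mechanism behind it: swapping one pendant subtree for another can create new accidental adjacencies at other embedded vertices, and I do not see any potential (number of accidental adjacencies, total slack, etc.) that is guaranteed to decrease. Likewise the strengthened inductive statement you gesture at (``embed $T$ with a prescribed leaf having clean image'') is exactly the sort of thing that fails by one unit when $\delta(G)=\Delta(T)$ and a $(2\ell+1)$-cycle passes through the designated image vertex. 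You have correctly located the hard case, but not solved it.

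The paper (following Jiang) avoids this obstacle by a completely different decomposition. Working with a minimal nonembeddable $T$, it passes to the skeleton $S(T)$ obtained by deleting all leaves, shows $S(T)$ has at most $\ell-1$ leaves and that every penultimate vertex of $T$ has at least two leaf neighbours. If $S(T)$ has no degree-$2$ vertex this forces $\operatorname{diam}(T)\le 2\ell$ and greedy works. Otherwise one selects a degree-$2$ vertex $w\in S(T)$ minimising the depth $m$ of its shallower side; a short lemma gives $m\le(\ell-1)/2$. Cutting $T$ at the edge $ww'$ into a small piece $T_1$ of diameter $\le 2m\le \ell-1$ and a large piece $T_2$, one embeds $T_2$ first and then runs the ball-packing \emph{inside the embedded tree} $f(T_2')$ to show that $f(w)$ has a neighbour at distance $>m$ from $f(T_2')$ in $G-f(w)$; sending $w'$ there, the small-diameter $T_1$ is greedily attached with no possible collision. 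Because the cut is always at a degree-$2$ vertex of $S(T)$, one never has to produce many fresh neighbours at a high-degree image vertex---the ``cleanness'' problem you isolate simply does not arise. (In this paper Theorem~\ref{thm-J} is actually obtained as a corollary of Theorem~\ref{thm-3} by replacing each edge of $G$ with a directed $2$-cycle; the proof of Theorem~\ref{thm-3} is the argument just described.)
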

Note that the condition $\delta(G)\ge \Delta(T)$ (instead of $\Delta(G)\ge \Delta(T)$) is indeed necessary when $\ell\ge 3$, which can be seen by considering a balanced double star and a suitable host graph (see \cite{Jiang} for detail).

It would be interesting to find extensions of these results to digraphs. All the digraphs considered in this paper have no loop or parallel arc. One has to decide which parameter will play the role of the minimum degree. A natural choice is the widespread notion \emph{minimum semidegree}, denoted by $\delta^0(D)$, which is the minimum of the in-degrees and out-degrees of the vertices of $D$. In the same way as in the undirected case, one can use a greedy embedding strategy to see that any oriented graph $D$ with $\delta^0(D)\ge k$ must contain each oriented tree with $k$ edges. And as before, it seems reasonable to ask whether this bound can be lowered if the underlying graph $G$ of $D$ (i.e., the graph we obtain by omitting directions) satisfies some extra conditions.

For a digraph $D$, the maximum degree $\Delta(D)$ is the maximum degree of its underlying graph. Denote the maximum out-degree and in-degree by $\Delta^+(D)$ and $\Delta^-(D)$, respectively. Write $\Delta^\pm(D)=\min\{\Delta^+(D),\Delta^-(D)\}$. 

Stein and Trujillo-Negrete \cite{Stein} proved the following result, which is a digraph version of Theorem \ref{thm-SW}.

\begin{theorem}[Stein and Trujillo-Negrete \cite{Stein}]\label{thm-ST}
Let $T$ be an oriented tree with $k$ edges, and let $D$ be a digraph with $\delta^0(D)\ge k/2$ having no oriented $4$-cycles. If $\Delta^\pm(D)>\Delta(T)$, then $D$ contains $T$ as a subgraph. 
\end{theorem}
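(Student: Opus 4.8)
The plan is to adapt Jiang's proof of Theorem~\ref{thm-J} to oriented graphs, splitting the oriented trees to be embedded into those of small radius, which will be placed inside a single ``ball'' of $D$, and those of large radius, which will be reduced to the small case by peeling. The essential new difficulty is that a large \emph{directed} girth only forbids short \emph{directed} cycles of $D$, whereas the closed walks that obstruct a tree embedding typically use arcs in both directions; so the undirected argument cannot simply be copied.

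For the small-radius case, root the oriented tree $T$ at a centre $r$ and assume its height is at most $\ell$. I would embed $T$ by breadth-first search: having embedded levels $0,\dots,i$, for the image $u_x$ of each level-$i$ vertex $x$ one places the out-children (resp.\ in-children) of $x$ on fresh out-neighbours (resp.\ in-neighbours) of $u_x$. Since $\delta^0(D)\ge\Delta(T)$, the out- and in-degrees of $u_x$ are separately large enough to accommodate the arcs of $T$ leaving, resp.\ entering, $x$, so the only possible failure is that too many neighbours of $u_x$ are already occupied. If an occupied vertex $u_y$ is an out-neighbour of $u_x$ and $y$ is not a tree-neighbour of $x$, then the arc $u_xu_y$ together with the image of the $x$--$y$ path of $T$ forms a closed walk of length $d_T(x,y)+1\le 2\ell$; as in Jiang's argument this contradicts the girth hypothesis, \emph{provided} the closed walk is a directed cycle.

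That proviso is where I expect the main obstacle to lie: the $x$--$y$ path of $T$ need not be directed, so the closed walk need not be a directed cycle, and then the girth gives nothing. To control this I would maintain a ``spread-out'' invariant on the current image $H$, namely that whenever two vertices of $H$ are joined in $D$ by a directed path of length at most $2\ell$, that path is already the image of a directed sub-path of $T$. Under such an invariant every occupied out-neighbour $u_y$ of $u_x$ that is not a tree-neighbour of $x$ does yield a short directed cycle, the number of occupied neighbours of $u_x$ is bounded by $\deg_T(x)$, and the extension step succeeds. Re-establishing the invariant after each extension is the technical heart: I expect it to require processing $T$ not purely level by level but by monotone blocks (maximal sub-arborescences whose arcs all point away from, or all toward, their root), routing a forward block into an out-ball and a backward block into an in-ball of $D$, and choosing the fresh neighbours so that the images of distinct blocks stay well separated. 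Throughout, out-degrees and in-degrees must be managed separately, since $N^+$ and $N^-$ are disjoint in an oriented graph, and it is here that the full strength $\delta^0(D)\ge k/\ell$ is used: a ball of radius $\ell$ expands enough that, even after deleting $O(k)$ forbidden vertices, every required extension remains available.

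For oriented trees $T$ of radius greater than $\ell$ I would induct on $k$. Rooting $T$ at a centre, its height exceeds $\ell$, so there is a vertex $v\ne r$ whose descending subtree $T_v$ has height exactly $\ell$, hence at least $\ell$ edges. Let $T^{\ast}$ be obtained from $T$ by replacing $T_v$ with the single leaf $v$; then $|E(T^{\ast})|\le k$ and $\Delta(T^{\ast})\le\Delta(T)$, so the hypotheses persist, and by induction---with the strengthened conclusion that the image of $v$ has a clean in- and out-neighbourhood, which the small-radius embedding automatically provides because $v$ is a leaf of $T^{\ast}$---the digraph $D$ contains $T^{\ast}$ with $v$ mapped to some $u_v$ whose relevant neighbours are free. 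Finally I would glue $T_v$ back in via the small-radius embedding inside the ball of radius $\ell$ about $u_v$, avoiding the at most $k$ already-used vertices; the inequality $k\le\ell\,\delta^0(D)$ guarantees that each peeling step removes at least $\ell$ edges, so the recursion terminates and the forbidden set never grows large enough to block an extension.
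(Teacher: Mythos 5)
This statement is Theorem~\ref{thm-ST}, which the paper cites from Stein and Trujillo-Negrete~\cite{Stein} and does not prove, so there is no in-paper argument to compare your sketch against. On its own terms, your sketch does not match the hypotheses of this theorem, and so has a genuine gap.

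The theorem assumes $\delta^0(D)\ge k/2$ and $\Delta^\pm(D)>\Delta(T)$, but \emph{not} $\delta^0(D)\ge\Delta(T)$. Your small-radius BFS step invokes ``since $\delta^0(D)\ge\Delta(T)$, the out- and in-degrees of $u_x$ are separately large enough to accommodate the arcs of $T$'' at every internal vertex of $T$; this is unavailable here. If $\Delta(T)>k/2$ (say $T$ is a star or near-star), $\delta^0(D)\ge k/2$ gives no such guarantee, and $\Delta^\pm(D)>\Delta(T)$ only promises a large out-degree at one vertex of $D$ and a large in-degree at (possibly) another. You would need to route the high-degree vertex of $T$ to such a special host vertex; a level-by-level greedy that treats all hosts uniformly cannot do this. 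Separately, the theorem's structural hypothesis is only ``no oriented $4$-cycles'': $D$ is a general digraph that may contain oriented $3$-cycles and digons. Your obstruction step (``a closed walk of length $d_T(x,y)+1\le 2\ell$ \dots\ contradicts the girth hypothesis'') is calibrated for girth $\ge 2\ell+1$, which with $\ell=2$ would also exclude $3$-cycles; here a closed walk of length $3$ yields no contradiction and the extension can fail. Incidentally, your main stated worry---that the obstructing walk might not be a \emph{directed} cycle---is a non-issue under this paper's conventions, since ``girth'' and the $\mathcal{C}_{\le 2\ell}$-free / ``no oriented $4$-cycles'' conditions all concern cycles of arbitrary orientation; the real problem is cycle length, not direction. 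Finally, the ``spread-out invariant'' and its re-establishment ``by monotone blocks'' are declared the ``technical heart'' but never carried out, which would leave the argument incomplete even if the hypotheses matched. For what it is worth, the paper's proof of its own Theorems~\ref{thm-2} and~\ref{thm-3} takes a different route from the BFS-plus-peeling you outline: it analyses a minimal nonembeddable tree, bounds its penultimate vertices (Lemma~\ref{lem-p1}), locates a low-depth degree-$2$ skeleton vertex via Lemma~\ref{lem-3}, splits $T$ there, and counts admissible out-neighbours of the image of that vertex via a ball-packing estimate.
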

Note that an \emph{oriented graph} is a digraph that contains no directed cycles of length two. Stein and Trujillo-Negrete \cite{Stein} also asked whether there is a natural analogue of Theorem \ref{thm-J}.
\begin{question}[Stein and Trujillo-Negrete \cite{Stein}]\label{q}
Is it true that every oriented graph $D$ of girth at least $2\ell+1$ with $\delta^0(D)\ge \max\{k/\ell,\Delta(T)\}$ contains every oriented tree with $k$ edges?
\end{question}

In this paper, we answer Question \ref{q} in affirmative and obtain the following result.
\begin{theorem}\label{thm-2}
Let $T$ be an oriented tree with $k$ edges and $D$ be a oriented graph with $\delta^0(D) \ge\max\{k/\ell, \Delta(T)\}$. If the girth of $D$ is at least $2\ell+1$, then $D$ contains $T$. 
\end{theorem}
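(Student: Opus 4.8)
The plan is to run an extremal argument reinforced by rotations, in the spirit of Jiang's proof of Theorem~\ref{thm-J}, while keeping careful track of arc orientations. First note that the case $\ell=1$ is trivial: then $\delta^0(D)\ge\max\{k,\Delta(T)\}=k$, and processing $T$ by repeatedly attaching a leaf and sending it to an as-yet-unused out- or in-neighbour of the image of its parent (of which there are at least $\delta^0(D)\ge k$) builds a copy of $T$ greedily. So assume $\ell\ge 2$, fix a root of $T$, and suppose for contradiction that $D$ contains no copy of $T$. Call an injective homomorphism $\phi$ from a subtree $T'$ of $T$ containing the root into $D$ a \emph{partial embedding}; among all partial embeddings choose $\phi\colon T'\to D$ with $|V(T')|$ maximum and, subject to that, extremal for a suitable secondary measure (to be pinned down so that every admissible rotation strictly improves the pair $(|V(T')|,\text{measure})$). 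Since $D\not\supseteq T$ we have $T'\subsetneq T$, so some $v\in V(T)\setminus V(T')$ is adjacent in $T$ to some $u\in V(T')$; reversing all arcs of $D$ and of $T$ simultaneously if necessary --- which changes neither the girth, nor $\delta^0$, nor $\Delta(T)$, nor whether $D\supseteq T$ --- we may assume this edge is oriented $u\to v$. Maximality of $|V(T')|$ forces every out-neighbour of $w_0:=\phi(u)$ to lie in $\phi(V(T'))$, and $\delta^0(D)\ge k/\ell$ gives at least $k/\ell$ of them.

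Now the rotation. Choose an out-neighbour $x_1=\phi(z_1)$ of $w_0$ and let $F_1$ be the pendant subtree of $T'$ hanging below $z_1$ (the component of $T'$ minus the $T'$-edge joining $z_1$ to its $T'$-neighbour $u_1$ that contains $z_1$). Deleting $F_1$, attaching $v$ as a leaf at $u$, and setting $\phi(v):=x_1$ gives a partial embedding of a new subtree of $T$; when $z_1$ is a leaf of $T'$ this is an order-preserving swap, and the argument will be arranged so that the rerouted pendant subtrees can always be kept small, using that the $\ge k/\ell$ preimages of the out-neighbours of $w_0$ cannot all carry large pendant subtrees because $T'$ has at most $k$ edges. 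In the new configuration $z_1$ (and the rest of $F_1$) must be re-attached; $z_1$ is adjacent in $T$ to $u_1\in V(T')$ via an edge of one of the two orientations, so we now need an unused out- or in-neighbour of $w_1:=\phi(u_1)$. If one exists, re-embedding and iterating these moves eventually increases $|V(T')|$ or improves the secondary measure, contradicting the choice of $\phi$. If not, $w_1$ is itself \emph{stuck}: all of its out-neighbours, or all of its in-neighbours, lie in $\phi(V(T'))$, and again there are at least $k/\ell$ of them. Since $x_1$ is a common neighbour of $w_0$ and $w_1$ in the underlying graph, $w_1$ lies at underlying-distance exactly $2$ from $w_0$ (equality failing only in degenerate cases, of which $w_0\sim w_1$ is excluded by girth $\ge 5$), so iterating produces stuck vertices $w_0,w_1,w_2,\dots$ along a geodesic of the underlying graph.

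The contradiction is then a counting one, and here the hypotheses are used at full strength. Girth at least $2\ell+1$ means that between any two vertices of the underlying graph of $D$ there is at most one path of length at most $\ell$; consequently, for the stuck vertices $w_0,\dots,w_{\ell-1}$ produced above, the sets $N^{\pm}(w_i)$ (out- or in-neighbourhood, whichever is the stuck side) intersect in at most the midpoint of the geodesic between $w_i$ and $w_j$ when $|i-j|=1$ and are disjoint when $|i-j|\ge 2$, and no $w_i$ lies in any $N^{\pm}(w_j)$. Hence the $\ell$ vertices $w_0,\dots,w_{\ell-1}$ together with $\bigcup_i N^{\pm}(w_i)$ form at least $\ell+\bigl(\ell\cdot\tfrac{k}{\ell}-(\ell-1)\bigr)=k+1$ distinct vertices, all lying in $\phi(V(T'))$; but $|\phi(V(T'))|=|V(T')|\le|V(T)|-1=k$, a contradiction. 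Therefore the rotation process must have terminated earlier with a genuine re-embedding, which we already saw is impossible. This contradiction proves $D\supseteq T$.

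The step I expect to be the main obstacle is making the rotation fully rigorous in the directed setting. Unlike the undirected case one must track, for each stuck vertex, whether it is stuck on its out-side or its in-side (rerouting a pendant subtree can switch between the two), and still guarantee that $\ell$ genuinely new, geodesically spread stuck vertices appear before the vertices of $T$ run out. Keeping the rerouted pendant subtrees small enough that the tight count $\ell\cdot(k/\ell)$ is not spoiled, and fixing the secondary extremal measure so that every admissible rotation is strictly improving, are the delicate points. The role of $\delta^0(D)\ge\Delta(T)$ is precisely to ensure that whenever a vertex is not stuck its image has enough out-/in-neighbours to host all of its $T$-children, so that ``not stuck'' always permits a real extension; the conditions $\delta^0(D)\ge k/\ell$ and girth at least $2\ell+1$ are then exactly calibrated so that $\ell$ rounds of rotation overrun the $k$ available vertices.
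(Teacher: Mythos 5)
Your proposal takes a genuinely different route from the paper. The paper's proof is structural: it considers a minimal nonembeddable tree $T$, shows via Lemmas~\ref{lem-p1}, \ref{lem-p2}, \ref{lem-3} that the skeleton $S(T)$ has a degree-$2$ vertex $w$ of small depth $m\le(\ell-1)/2$, splits $T$ at $w$ into $T_1\cup T_2$, embeds $T_2$, partitions $N^+_D(f(w))$ into $N_1,N_2,N_3$ by distance in $D-f(w)$ to the image of $T_2'$, proves (Claim~\ref{clm}) that $N_3\ne\emptyset$ lets $T_1$ be grafted on without collision, and finally counts $|N_1\cup N_2|$ via pairwise-disjoint distance balls $B(x)$ inside $f(T_2')$. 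You are instead attempting a P\'osa-style rotation with stuck vertices and a geodesic counting contradiction. That is an interesting idea, but it is not what Jiang did in the undirected case either, and as written it has several genuine gaps.

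The most serious gap is the geodesic claim. You assert that the rotation produces stuck vertices $w_0,w_1,\dots,w_{\ell-1}$ ``along a geodesic of the underlying graph,'' but no argument is given that these vertices are even pairwise distinct, let alone that $dist(w_i,w_j)=2|i-j|$. Indeed in your construction $w_1=\phi(u_1)$ where $u_1$ is the $T'$-neighbour of $z_1=\phi^{-1}(x_1)$; nothing prevents $u_1=u$, in which case $w_1=w_0$ and the sequence of stuck vertices does not advance at all. What the girth condition controls is cycles of length $\le 2\ell$; it does rule out $dist(w_i,w_j)\le 2$ when $|i-j|\ge 2$ (giving disjointness of the neighbourhoods) provided the $w_i$'s are distinct and the underlying walk $w_0 a_1 w_1 a_2\cdots$ is a path, but establishing that simplicity is precisely what is missing. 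A closed walk of length $\le 2\ell$ in a graph of girth $\ge 2\ell+1$ need not be a cycle --- it can backtrack --- so the fact that girth forbids short cycles does not by itself force your walk to be simple.

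Two further unproved but load-bearing claims: (i) the ``secondary extremal measure'' that is supposed to make every rotation strictly improving is never defined, so the rotation process has no termination or well-foundedness argument; and (ii) the claim that rerouted pendant subtrees ``can always be kept small'' is asserted but not shown --- this is where, in effect, all the subtlety of Jiang's structural lemmas (number of penultimate vertices, degree-2 skeleton vertices of small depth, each penultimate vertex having $\ge 2$ leaves) is being swept under the rug. Without these, the counting $\ell\cdot(k/\ell)-(\ell-1)+\ell=k+1$ does not yet have the hypotheses it needs. In addition, the out/in asymmetry you flag (a vertex can be stuck on either side, and rotation can flip which side a later vertex is stuck on) is not merely ``delicate'' --- it directly affects whether the $N^{\pm}(w_i)$ in your count are all on one side, which is assumed implicitly when you claim neighbourhoods of consecutive $w_i$'s share at most the midpoint. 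As it stands, the proposal identifies the right difficulties but does not resolve them, whereas the paper's decomposition-and-distance-ball argument sidesteps rotation entirely.
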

The condition $\delta(D)\ge \Delta(T)$ is necessary. Erd\H{o}s and Sachs \cite{Sachs} proved the existence of $d$-regular graphs with girth $g$ for all values of $d$ and $g$ provided that $d\ge 2$. Let $T$ be an oriented tree such that $\Delta^+(T)=\Delta(T)$, and $G$ be a $2(\Delta(T)-1)$-regular graph with girth $2\ell+1$. Consider an orientation $D$ of $G$ such that no vertex has in- or out-degree larger than $\Delta(T)-1$, such an orientation exists due to Lemma 1 in \cite{Brinkmann}. Clearly, $D$ does not contain $T$.

If the tree we are looking for is antidirected, which means there is no directed path of length $2$, we can replace the minimum semidegree with the \emph{minimum pseudo-semidegree}, denoted by $\Bar{\delta}^0(D)$ of $D$. This is $0$ if $D$ has no arcs, and otherwise is defined as the smallest value of minimum positive out-degree and minimum positive in-degree. Clearly, $\delta^0(D) \leq \Bar{\delta}^0(D)$. Moreover, we permit the host digraph $D$ containing directed cycles of any length.
Let 
$$\mathcal{C}_{\le 2\ell}=\{C \mid C \text{ is an oriented cycle and } 3\le |C|\le 2\ell\},$$ 
and 
$$\mathcal{C}_{\le 2\ell}^*=\mathcal{C}_{\le 2\ell}\setminus \{\text{all directed cycles}\}.$$ 
Note that an oriented graph with girth at least $2\ell+1$ is equivalent to being $\mathcal{C}_{\le 2\ell}$-free, but a $\mathcal{C}_{\le 2\ell}$-free or $\mathcal{C}_{\le 2\ell}^*$-free digraph may not be oriented. 
For an antidirected tree,  we can slightly strengthen Theorem \ref{thm-2} as below.
\begin{theorem}\label{thm-3}
Let $T$ be an antidirected tree with $k$ edges, and let $D$ be a digraph with $\Bar{\delta}^0(D)\ge \max\{k/\ell,\Delta(T)\}$. If $D$ is $\mathcal{C}_{\le 2\ell}^*$-free, then $D$ contains $T$.
\end{theorem}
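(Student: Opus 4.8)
The plan is to run the embedding argument behind Jiang's Theorem~\ref{thm-J}, as adapted for digraphs in the proof of Theorem~\ref{thm-2}, but to isolate the two places where the orientation enters and show that for an \emph{antidirected} $T$ the weaker hypotheses ($\bar{\delta}^{0}$ rather than $\delta^{0}$, and $\mathcal{C}_{\le 2\ell}^{*}$-freeness rather than large girth) still suffice. Since $T$ is antidirected, every vertex is a \emph{source} (all incident arcs leave it) or a \emph{sink} (all incident arcs enter it), so $V(T)=A\cup B$ with every arc directed from $A$ to $B$. We may assume $D$ has an arc, so $\bar{\delta}^{0}(D)\ge 1$; otherwise $k=0$ and there is nothing to prove.

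The first point is that \emph{degrees come for free}. Root $T$ at a leaf $r$ and embed in breadth-first order, placing $r$ at an appropriate endpoint of some arc of $D$. When a non-root vertex $w$ is placed at an out-neighbour (if $w\in A$) or in-neighbour (if $w\in B$) of the image of its parent, $\phi(w)$ thereby acquires positive out-degree, respectively positive in-degree; hence, by $\bar{\delta}^{0}(D)\ge\max\{k/\ell,\Delta(T)\}\ge\deg_{T}(w)$, it already has at least $\deg_{T}(w)$ arcs of the type needed to receive the children of $w$. Thus the only genuine obstruction to an embedding is a collision of images. The second point is that \emph{short chords are forbidden}: if $\phi\colon T_{0}\to D$ embeds a subtree $T_{0}\subseteq T$ and $x=\phi(a),\,y=\phi(b)$ with $2\le d_{T_{0}}(a,b)\le 2\ell-1$, then there is no arc of $D$ between $x$ and $y$, because together with the image of the $a$--$b$ path in $T_{0}$ — an antidirected path of length $\ge 2$, hence one with an internal vertex whose two arcs point the same way — such an arc would close up into an oriented, \emph{non-directed} cycle of length between $3$ and $2\ell$, i.e.\ a member of $\mathcal{C}_{\le 2\ell}^{*}$. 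So inside $\phi(T_{0})$ the only arcs of $D$ are the tree arcs, apart from possible \emph{long chords} between images at $T_{0}$-distance $\ge 2\ell$.

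Now suppose for contradiction that $T\not\hookrightarrow D$, and choose an embedding $\phi\colon T_{0}\to D$ of a subtree $T_{0}\ni r$ with $|V(T_{0})|$ maximum and, among these, minimizing a suitable secondary potential (as in Jiang's proof). Pick $v\in V(T)\setminus V(T_{0})$ adjacent to its parent $u\in V(T_{0})$; say $v\in B$, so $u\in A$ and $\phi(v)$ must be placed at an unused out-neighbour of $\phi(u)$. By maximality all out-neighbours of $\phi(u)$ lie in $\phi(V(T_{0}))$; by the first point the tree arcs account for at most $\deg_{T}(u)-1\le\bar{\delta}^{0}(D)-1<\mathrm{outdeg}(\phi(u))$ of them, so by the second point $\phi(u)$ must send at least one long chord $\phi(u)\to\phi(z)$ with $d_{T_{0}}(u,z)\ge 2\ell$, and such chords consume all the remaining out-slots of $\phi(u)$. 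Here the hypothesis $k/\ell\le\bar{\delta}^{0}(D)\le\mathrm{outdeg}(\phi(u))$, i.e.\ $|V(T_{0})|\le k+1\le\ell\cdot\mathrm{outdeg}(\phi(u))+1$, is decisive: a counting argument over the at most $k+1$ image vertices, sorted by distance into about $\ell$ classes along the $u$--$z$ path, produces a chord-free window through which $\phi$ can be rerouted. In the simplest case one detaches a leaf $z$ of $T_{0}$ that is a chord-partner of $\phi(u)$ (necessarily a sink, since the chord enters $\phi(z)$), sets $\phi(v)=\phi(z)$, and re-places $z$ at a free neighbour of the image of its parent, obtaining either a larger embedded subtree (contradicting maximality) or an embedding of the same size with strictly smaller potential (contradicting minimality).

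The crux is exactly this last step: one must arrange that such a leaf (or, in general, a suitable detached subtree) is available, that reattaching it never recreates a collision or a short non-directed cycle, and that it genuinely decreases the potential rather than merely relocating the obstruction. This is the technical heart of Jiang's argument, and by the two points above the orientation contributes nothing new to it — every reattachment automatically has room, and any cycle that could arise is a harmless long one. The one remaining (routine) case is when $T$ has radius at most $\ell$: rooting $T$ at a central vertex and embedding level by level, the distance computation in the second point shows that no chords can occur before the last level, so the first point supplies enough neighbours at every step and $T$ embeds directly.
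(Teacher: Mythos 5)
Your two preliminary observations are correct and are indeed the right reasons why the antidirected case goes through with $\bar{\delta}^0$ and $\mathcal{C}^*_{\le 2\ell}$-freeness: once a vertex of $T$ is embedded so that its image has positive out- (resp.\ in-) degree, the pseudo-semidegree hypothesis supplies enough arcs of the required type; and any arc of $D$ between images at $T_0$-distance $2$ through $2\ell-1$ would close a non-directed short cycle, because an antidirected path of length $\ge 2$ has an internal vertex that is a local source or sink, so the resulting cycle cannot be directed. These are exactly the two places where the paper's unified proof of Theorems \ref{thm-2} and \ref{thm-3} uses the weaker hypotheses.

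Beyond that, however, your proposal has a genuine gap: the main embedding argument is asserted rather than carried out. You posit a maximal embedded subtree and an unspecified ``secondary potential,'' then say a counting argument ``sorted by distance into about $\ell$ classes\ldots\ produces a chord-free window through which $\phi$ can be rerouted,'' while explicitly acknowledging that this ``is the technical heart of Jiang's argument.'' That step is precisely what must be proved, and the sketch gives no mechanism for doing so. Moreover, the detach-and-reattach framework you describe is not what the paper (nor Jiang) does: the paper takes a \emph{minimal nonembeddable} tree, shows it has at most $\ell-1$ penultimate vertices (Lemma \ref{lem-p1}) each adjacent to $\ge 2$ leaves (Lemma \ref{lem-p2}), locates a degree-$2$ vertex $w$ of $S(T)$ of depth $m\le(\ell-1)/2$ (Lemma \ref{lem-3}), splits $T$ at $w$ into a small-diameter piece $T_1$ and a large piece $T_2$, embeds $T_2$, partitions the out-neighbours of $f(w)$ into $N_1,N_2,N_3$, and proves $N_3\neq\emptyset$ via a disjoint-ball count in $f(T_2')$ using \eqref{eq} with a two-case analysis on $m$, after which Lemma \ref{lem-E} attaches $T_1$. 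The claim that ``the orientation contributes nothing new'' to your unspecified rerouting is not substantiated; in the paper the antidirected hypothesis must be re-checked at several delicate points (e.g.\ in Claim \ref{clm} and in verifying that $f(w'')\in N_1$ and that the auxiliary cycles are non-directed). As written, your argument does not amount to a proof.
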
 
For an undirected graph $G$ with $\delta(G) \ge\max\{k/\ell, \Delta(T)\}$ and a tree $T$ with $k$ edges, consider a digraph $D$ obtained by replacing each edge with a directed 2-cycle. By Theorem \ref{thm-3}, $D$ contains any antidirected orientation of $T$, and thus $G$ contains $T$. Therefore, Theorem \ref{thm-3} implies Theorem \ref{thm-J}.

\section{Preliminaries}
In this section, we first introduce some additional terminology and notations that will be used throughout this paper, and then give some lemmas for proving our main results.

Let $D$ be a digraph. When we write an edge $xy$, we don't specify its orientation, that is, there is $(x,y)\in E(D)$ or $(y,x)\in E(D)$. For any set $U\subseteq  V(D)$, we denote by $D[U]$ the subgraph of $G$ induced by $U$, and $D-U=D[V(D)\setminus U]$. In particular, when $U$ contains a single vertex, say $u$, we simply write $D-\{u\}$ as $D-u$.  Let $H$ be another digraph, $D- H$ denotes the graph $D[V (D) \setminus V (H)]$. If $D$ is oriented, $D-xy$ denotes the subgraph obtained by deleting the edge $xy$ from $D$.

For $v\in V(D)$, the out-neighborhood (in-neighborhood) of $v$, denoted by $N_D^+(v)$ ($N_D^-(v)$) is the set of out-neighbors (out-neighbors) of $v$. Let $N_D(v)=N_D^+(v)\cup N_D^-(v)$. The out-degree, in-degree and degree of $v$ are $d_D^+(v)=|N_D^+(v)|, d_D^-(v)=|N_D^-(v)|$ and $d_D(v)=d_D^+(v)+d_D^-(v)$, respectively. The \emph{distance} $dist_D(u,v)$ of vertices $u,v\in V(D)$ is the length of a shortest path of any orientation that joins them. For $v\in V(D)$ and $A\subseteq V(D)$, we define $dist_D(v,A)=\min\{dist_D(u,v)\mid u\in A\}$.
The \emph{diameter} of $D$ is the maximum distance between any pair of its vertices.

Let $T$ be an oriented tree. The \emph{leaves} of $T$ are the vertices having exactly one neighbor. There are two kinds of leaves: \emph{in-leaves} which have out-degree 1 and in-degree 0, and \emph{out-leaves} which have out-degree 0 and in-degree 1. Two leaves with the same neighbor in $T$ are \emph{siblings}. The \emph{skeleton} of $T$, denoted by $S(T)$, is the subtree obtained by deleting all leaves of $T$. For a subtree $T'$ of $S(T)$, $L(T')$ is the subtree of $T$ containing $T'$ and all the leaves in $T$ that are adjacent to $V(T')$. A \emph{penultimate vertex} in $T$ is a leaf in $S(T)$.
A component in a graph is \emph{nontrivial} if it contains at least two vertices. If $P$ is an oriented path and $x,y$ are two vertices on $P$, then $P[x,y]$ denotes the portion of $P$ between $x$ and $y$.

Given an oriented tree $T$ and a digraph $D$, an \emph{embedding} of $T$ is an injective function $f\colon V(T)\to V(D)$ preserving adjacencies. If such an embedding exists, we say that $T$ embeds in $D$, or $T$ is a subgraph of $D$. For a subgraph $H$ of $T$ and an embedding $f$ from $T$ to $D$, we let $f(H)$ denote the image of $H$ in $D$, or the vertex set of $f(H)$.

\begin{lemma}[Jiang \cite{Jiang}]\label{lem-1}
Let $G$ be a connected graph of order $n$ and $S$ be a subset of $V(G)$ such that every pair in $S$ has distance at least $2\ell-1$ in $G$. Then $|S|\le \max\{\lfloor\frac{n}{\ell}\rfloor,1\}$.
\end{lemma}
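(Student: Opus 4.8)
The plan is to turn the distance hypothesis into a volume estimate: I will assign to each vertex of $S$ a private set of at least $\ell$ vertices of $G$, so that these sets are pairwise disjoint, and then conclude $\ell|S|\le n$. Since $|S|$ is an integer, $\ell|S|\le n$ gives $|S|\le\lfloor n/\ell\rfloor$, which suffices. First I would dispose of the degenerate cases: if $|S|\le 1$ the inequality $|S|\le\max\{\lfloor n/\ell\rfloor,1\}$ holds with nothing to prove, and $\ell\ge1$ may be assumed; so from now on $|S|\ge2$.

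Next, for every $v\in V(G)$ choose $\sigma(v)\in S$ to be a vertex of $S$ at minimum distance from $v$ (ties broken arbitrarily); this is well defined because $S\neq\emptyset$, and because $G$ is connected the fibres $\sigma^{-1}(s)$ for $s\in S$ partition $V(G)$. The key step is to show $|\sigma^{-1}(s)|\ge\ell$ for each $s\in S$. Fix $s\in S$, and using $|S|\ge2$ pick some $s'\in S\setminus\{s\}$; let $P=v_0v_1\cdots v_m$ be a shortest $s$--$s'$ path in $G$, so $v_0=s$, $\mathrm{dist}_G(v_i,s)=i$, and $m=\mathrm{dist}_G(s,s')\ge 2\ell-1\ge\ell$, so the vertices $v_0,\dots,v_{\ell-1}$ all exist and are distinct. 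For $0\le i\le\ell-1$ and any $s''\in S\setminus\{s\}$, the triangle inequality gives $\mathrm{dist}_G(v_i,s'')\ge\mathrm{dist}_G(s,s'')-i\ge(2\ell-1)-i\ge\ell>i=\mathrm{dist}_G(v_i,s)$, so $s$ is the (unique) nearest element of $S$ to $v_i$, i.e.\ $\sigma(v_i)=s$. Hence $v_0,\dots,v_{\ell-1}\in\sigma^{-1}(s)$ and $|\sigma^{-1}(s)|\ge\ell$.

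Summing over $s\in S$ then yields $n=\sum_{s\in S}|\sigma^{-1}(s)|\ge\ell|S|$, hence $|S|\le n/\ell$, and since $|S|\in\mathbb{Z}$ this is $|S|\le\lfloor n/\ell\rfloor\le\max\{\lfloor n/\ell\rfloor,1\}$, as required. The only substantive use of the hypotheses is concentrated in the key step: connectivity guarantees an honest shortest path from $s$ to another point of $S$, and the bound $2\ell-1$ on pairwise distances guarantees that the first $\ell$ vertices of that path are strictly closer to $s$ than to every other point of $S$. I do not expect any real obstacle beyond carefully handling the trivial cases $|S|\le1$ (and the harmless assumption $\ell\ge1$), which is exactly why the statement carries the "$\max\{\cdot,1\}$".
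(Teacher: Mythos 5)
The paper does not prove Lemma~\ref{lem-1}; it simply cites it from Jiang \cite{Jiang}, so there is no internal proof to compare against. Your Voronoi-cell argument is correct: the case $|S|\le 1$ is disposed of trivially, the fibres $\sigma^{-1}(s)$ partition $V(G)$, and for each $s\in S$ (using $|S|\ge 2$ and connectivity) the first $\ell$ vertices $v_0,\dots,v_{\ell-1}$ of a shortest path from $s$ to another point of $S$ satisfy $\mathrm{dist}_G(v_i,s)=i\le\ell-1<\ell\le(2\ell-1)-i\le\mathrm{dist}_G(v_i,s'')$ for every $s''\in S\setminus\{s\}$, so they all lie in $\sigma^{-1}(s)$; summing gives $n\ge\ell|S|$ and integrality yields $\lfloor n/\ell\rfloor$. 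This is the standard packing proof for such a statement (Jiang's own argument runs along the same lines, phrased via disjoint balls of radius $\ell-1$ rather than a full Voronoi partition of $V(G)$; the two are interchangeable here), and I see no gaps.
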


\begin{lemma}[Jiang \cite{Jiang}]\label{lem-2}
A tree with $t$ leaves has at most $t-2$ vertices of degree at least $3$.
\end{lemma}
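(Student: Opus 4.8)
The plan is a one-line degree-counting argument. Write $T$ for the tree, $n$ for its number of vertices, and for each $i\ge 1$ let $n_i$ denote the number of vertices of $T$ of degree exactly $i$; in particular $n_1=t$. I would first record the two identities that every finite tree on at least two vertices satisfies: $\sum_{i\ge 1}n_i=n$ (a tree with an edge has no isolated vertex, so every vertex has degree at least $1$) and $\sum_{i\ge 1} i\,n_i=2|E(T)|=2(n-1)$, using the handshake lemma together with the fact that a tree on $n$ vertices has exactly $n-1$ edges.

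Next I would eliminate $n$ by subtracting twice the first identity from the second, obtaining $\sum_{i\ge 1}(i-2)\,n_i=-2$. The degree-$1$ vertices contribute $-n_1=-t$ to this sum and the degree-$2$ vertices contribute $0$, so after moving these terms the identity rearranges to $\sum_{i\ge 3}(i-2)\,n_i=t-2$.

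Finally, since $i-2\ge 1$ for every $i\ge 3$, each summand satisfies $(i-2)\,n_i\ge n_i$, whence $\sum_{i\ge 3}n_i\le \sum_{i\ge 3}(i-2)\,n_i=t-2$; and $\sum_{i\ge 3}n_i$ is exactly the number of vertices of $T$ of degree at least $3$, which gives the claim. The only point needing a word of care is the degenerate case $n=1$, i.e.\ a single vertex with no degree-$1$ vertex at all; there one simply assumes $T$ has at least one edge, so that $t\ge 2$ and the bound $t-2$ is meaningful. Beyond this trivial edge case there is no real obstacle: the lemma is an immediate consequence of the handshake identity specialised to trees.
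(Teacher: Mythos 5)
Your degree-counting argument is correct: the two identities $\sum_{i\ge1}n_i=n$ and $\sum_{i\ge1}i\,n_i=2(n-1)$ combine to give $\sum_{i\ge3}(i-2)n_i=t-2$, and since each weight $i-2$ is at least $1$ for $i\ge3$, the number of vertices of degree at least $3$ is at most $t-2$. The paper states this lemma without proof, citing Jiang, but your argument is the standard (and essentially canonical) derivation from the handshake identity for trees, which is what Jiang's proof amounts to as well.
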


The following is a basic embedding lemma that plays an important role in proving Theorems \ref{thm-2} and \ref{thm-3}.

\begin{lemma}\label{lem-E}
Let $D$ be a $\mathcal{C}_{\le 2\ell}$-free (resp. $\mathcal{C}_{\le 2\ell}^*$-free) digraph and $T$ be an oriented (resp. antidirected) tree with diameter at most $2\ell$. If $\delta^0(D)\ge \Delta(T)$ (resp. $\Bar{\delta}^0(D)\ge \Delta(T)$), then $D$ contains a copy of $T$. Furthermore, if $(x,y)\in E(D)$ and $(u,v)\in E(T)$, then $T$ can be embedded in $D$ such that $u,v$ are mapped to $x,y$ respectively.
\end{lemma}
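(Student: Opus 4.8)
The plan is to prove the stronger ``furthermore'' assertion by induction on $|V(T)|$; the bare statement then follows by choosing $(x,y)$ to be any arc of $D$ (one exists since $\Delta(T)\ge 1$ together with the (pseudo‑)semidegree hypothesis forces every vertex of $D$ to have positive out‑degree, and if $T$ has no edge the statement is trivial). The base case $|V(T)|=2$ is handled by mapping the single edge of $T$ onto $(x,y)$. For the inductive step, with $|V(T)|\ge 3$, I would take a leaf $p$ of $T$ with $p\notin\{u,v\}$ — such a leaf exists because $T$ has at least two leaves while the adjacent vertices $u,v$ cannot both be leaves — and let $q$ be its unique neighbour. Reversing all arcs of $D$, $T$, $(x,y)$ and $(u,v)$ simultaneously if necessary (this preserves every hypothesis), we may assume $(q,p)\in E(T)$; in the antidirected case this forces $q$ to be a source of $T$. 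The tree $T-p$ is again oriented (resp. antidirected), has diameter at most $2\ell$, satisfies $\Delta(T-p)\le\Delta(T)$ and still contains the edge $(u,v)$, so induction yields an embedding $f$ of $T-p$ into $D$ with $f(u)=x$ and $f(v)=y$; the remaining task is to find a vertex $w\in N_D^+(f(q))\setminus f(V(T-p))$ and put $f(p)=w$.

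The key step is to control $f(V(T-p))\cap N_D^+(f(q))$. I would argue that this set consists only of images of out‑neighbours of $q$ in $T-p$. Indeed, suppose $f(v')\in N_D^+(f(q))$ for some $v'\in V(T-p)$ with $dist_T(q,v')\ge 2$. Since $p$ is a leaf at $q$, $dist_T(q,v')=dist_T(p,v')-1\le\mathrm{diam}(T)-1\le 2\ell-1$. Applying $f$ to the $q$–$v'$ path of $T-p$ gives a path of $D$ from $f(q)$ to $f(v')$ of length $dist_T(q,v')$, and appending the arc $f(q)\to f(v')$ creates a cycle $C$ of $D$ with $3\le|C|=dist_T(q,v')+1\le 2\ell$ — it is a genuine cycle because $f$ is injective on $V(T-p)$ and, as $dist_T(q,v')\ge 2$, the vertices $f(q),f(v')$ are non‑consecutive on that path. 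In the oriented case $C\in\mathcal{C}_{\le 2\ell}$, contradicting $\mathcal{C}_{\le 2\ell}$‑freeness. In the antidirected case $q$ is a source, so both the first edge of the $f$‑image path and the appended arc $f(q)\to f(v')$ leave $f(q)$; thus $C$ is an oriented cycle that is not directed, i.e.\ $C\in\mathcal{C}_{\le 2\ell}^*$, again a contradiction. Hence every used vertex in $N_D^+(f(q))$ is the image of a neighbour of $q$ in $T-p$, and such a neighbour is necessarily an out‑neighbour (an in‑neighbour $v'$ would give both $f(v')\to f(q)$ and $f(q)\to f(v')$, impossible in the oriented case and impossible in the antidirected case since the source $q$ has no in‑neighbour). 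Therefore $|f(V(T-p))\cap N_D^+(f(q))|=d_{T-p}^+(q)=d_T^+(q)-1$.

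To finish, I would note that $f(q)$ has positive out‑degree in $D$: this is automatic in the oriented case, while in the antidirected case $q$ is a source with $d_T(q)\ge 2$ and hence has an out‑neighbour in $T-p$ whose image lies in $N_D^+(f(q))$. So by the (pseudo‑)semidegree hypothesis $|N_D^+(f(q))|\ge\Delta(T)\ge d_T^+(q)>d_T^+(q)-1$, which leaves a vertex $w\in N_D^+(f(q))$ unused by $f$. Setting $f(p)=w$ extends the embedding to all of $T$ with $f(u)=x$, $f(v)=y$, which completes the induction.

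I expect the main obstacle to be the cycle argument itself: one needs the forbidden cycle $C$ to have length at most $2\ell$ rather than $2\ell+1$, and this is exactly what the identity $dist_T(q,v')=dist_T(p,v')-1$ together with $\mathrm{diam}(T)\le 2\ell$ buys. The further subtlety separating the $\mathcal{C}_{\le 2\ell}^*$‑free case from the $\mathcal{C}_{\le 2\ell}$‑free case is that one must verify $C$ is not a directed cycle, which is where the source/sink structure of an antidirected tree at the vertex $q$ is used.
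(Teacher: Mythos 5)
Your proof is correct and follows essentially the same route as the paper: induction on the size of $T$, peeling off a leaf $p\notin\{u,v\}$, embedding $T-p$, and using the key inequality $dist_T(q,v')=dist_T(p,v')-1\le 2\ell-1$ together with the girth hypothesis to show that the only used vertices of $N_D^+(f(q))$ are images of neighbours of $q$, leaving a free vertex by the (pseudo-)semidegree bound. The paper's version is slightly terser — it bounds $|N_D^+(f(w'))\cap f(T-w)|$ by $d_T(w')-1$ rather than $d^+_T(w')-1$ and does not spell out that the obstructing cycle is non-directed in the antidirected case, nor that $f(w')$ has positive out-degree so that the pseudo-semidegree hypothesis actually applies; you make both of these points explicit, which is a genuine improvement in rigour but not a different argument.
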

\begin{proof}
Suppose $T$ has $k$ edges. We prove the result by induction on $k$. The result holds trivially when $k=1$.  Assume now that $k\ge 2$, and $(x,y)\in E(D)$, $(u,v)\in E(T)$ be given. Let $w$ be a leaf of $T$ distinct from $u,v$. By symmetry, we may assume that $w$ is an out-leaf. Let $w'$ be its in-neighbour in $T$ ($w'$ may be $u$ or $v$). By the induction hypothesis, we have an embedding $f$ from $T-w$ to $D$ such that $f(u)=x$ and $f(v)=y$. Since the diameter of $T$ is at most $2\ell$, we have $N_D^+(f(w'))\cap f(T- N_T(w'))=\emptyset$. Otherwise, let $z\in N_D^+(f(w'))\cap f(T- N_T(w'))$, then we would obtain a cycle (is non-directed if $T$ is antidirected) of length between $3$ and $2\ell$,  formed by the union of a shortest path (with length at most $2\ell-1$) in $f(T-w)$ connecting $f(w')$ and $z$, and the edge $f(w')z$. Because $d_D^+(f(w'))\ge \Delta(T)\ge d_T(w')=d_{T-w}(w')+1$, there is an out-neighbor of $f(w')$ outside $f(T-w)$, and then we can extend $f$ to an embedding of $T$ in $D$. 
\end{proof}

For fixed $k,\ell$, and a $\mathcal{C}_{\le 2\ell}$-free (resp. $\mathcal{C}_{\le 2\ell}^*$-free) digraph $D$ with minimum semidegree (resp. minimum pseudo-semidegree) at least $k/\ell$, a \emph{minimal nonembeddable tree} is an oriented (resp. antidirected) tree $T$ with at most $k$ edges and maximum degree at most $\delta^0(D)$ (resp. $\Bar{\delta}^0(D)$), such that $T$ cannot be embedded in $D$ but every proper subtree of $T$ can be embedded in $D$.
\begin{lemma}\label{lem-p1}
Let $T$ be a minimal nonembeddable tree for some fixed $k,\ell$ and $D$. Then $T$ contains at most $\ell-1$ penultimate vertices.
\end{lemma}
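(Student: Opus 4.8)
The plan is to argue by contradiction: assume a minimal nonembeddable tree $T$ has at least $\ell$ penultimate vertices $p_1,\dots,p_s$ ($s\ge\ell$) and derive a contradiction. First I would reduce to the case where $T$ has more than $2\ell$ edges: otherwise $T$ has diameter at most $2\ell$, and since $\Delta(T)\le\delta^0(D)$ (resp. $\bar\delta^0(D)$) by the very definition of a minimal nonembeddable tree, Lemma \ref{lem-E} would embed $T$ in $D$, a contradiction. In particular this forces $\delta^0(D)\ge k/\ell>2$. Let $L_i$ be the nonempty set of leaves of $T$ at $p_i$, and set $T_0=T-\bigcup_i L_i$; then $T_0$ is a proper subtree of $T$ in which each $p_i$ is a leaf, so $T_0$ embeds in $D$. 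Starting from such an embedding, I extend it one leaf at a time, as far as possible, to a partial embedding $\phi$ of $T$ of maximum size, with image $U:=\phi(\cdot)$, so $|U|\le k$. Since $T$ is not embeddable, some (deleted) leaf $w$, say at $p_j$, is missing and cannot be added to $\phi$.

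The heart of the argument is a local obstruction coming from the girth together with Lemma \ref{lem-1}. Say $w$ is an out-leaf at $p_j$; then nonaddability forces $N_D^+(\phi(p_j))\subseteq U$. Now observe that any two out-neighbours $z,z'$ of $\phi(p_j)$ lie at distance at least $2\ell-1$ in $D-\phi(p_j)$: a shorter $z$–$z'$ path avoiding $\phi(p_j)$, closed up with the two arcs $\phi(p_j)\to z$ and $\phi(p_j)\to z'$, would be a cycle of length between $3$ and $2\ell$ which, having two arcs leaving $\phi(p_j)$, is not directed, hence lies in $\mathcal C_{\le 2\ell}^*$ (and in $\mathcal C_{\le 2\ell}$), contradicting the hypothesis on $D$. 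Deleting from $U$ the vertex $\phi(p_j)$ together with the images of the leaves of $p_j$ already placed by $\phi$ leaves the image of a connected subtree, so a connected subgraph $D[U']$ with $|U'|\le k-1$ minus the number of placed leaves at $p_j$; it contains all out-neighbours of $\phi(p_j)$ except the at most $\Delta(T)-1$ placed out-leaves at $p_j$, which are pairwise at distance $\ge 2\ell-1$ there. Lemma \ref{lem-1} then bounds their number by $\max\{\lfloor |U'|/\ell\rfloor,1\}$, and combining this with $d_D^+(\phi(p_j))\ge\max\{k/\ell,\Delta(T)\}\ge\deg_T(p_j)$ yields a short linear inequality relating $\ell$, $\deg_T(p_j)$, and the number of leaves of $p_j$ already placed.

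To turn these local inequalities into the global bound $s\le\ell-1$ is the delicate point, and I expect it to be the main obstacle. A single blocked penultimate vertex only gives the local inequality above, which in general is compatible with $s$ large, so one must run the argument at several penultimate vertices at once. The plan is to control the order of re-insertion — roughly, processing penultimate vertices of small degree first and putting back at most one leaf per penultimate vertex per round — so that, when the process stalls, the penultimate vertices that remain unsatisfied (which are then sufficiently many) each satisfy a Lemma \ref{lem-1} inequality with essentially none of their leaves of the blocked orientation yet placed; summing these, and feeding in $\sum_i(\deg_T(p_i)-1)\le(\text{number of leaves of }T)$, Lemma \ref{lem-2}, and both halves of $\delta^0(D)\ge\max\{k/\ell,\Delta(T)\}$, should contradict $s\ge\ell$. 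The technical difficulty is precisely this bookkeeping for penultimate vertices that carry several (and, in the general oriented case, mixed-orientation) leaves, where one cannot assume the process stalls at a penultimate vertex with a single leaf; handling that case is where the proof requires real care.
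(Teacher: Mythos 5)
You have assembled the right ingredients --- Lemma~\ref{lem-1}, the girth bound forcing out-neighbours of a blocked vertex to be pairwise far apart in the image, and the minimality of $T$ --- but you explicitly defer the crucial counting step (``the delicate point'', ``the main obstacle''), and that deferred step is where the proof actually lives. As you yourself note, the local inequality at a single blocked penultimate vertex is too weak: Lemma~\ref{lem-1} applied to a set $U'$ of size close to $k$ only gives $|N_D^+(\phi(p_j))\cap U'|\le\lfloor k/\ell\rfloor$, which does not contradict $d_D^+(\phi(p_j))\ge\lceil k/\ell\rceil$ once a few leaves of $p_j$ are already placed, and a sum over several penultimate vertices does not obviously fix this. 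Your proposed ``one leaf per penultimate vertex per round'' re-insertion does not supply the missing slack.

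The idea you are missing is a \emph{global truncation} of the embedded tree that exploits all $\ell$ penultimate vertices at once and turns Lemma~\ref{lem-1} into a strong enough bound. Pick $u$ a penultimate vertex of \emph{minimum} $T$-degree, say with $t$ leaf neighbours, so each of the other $\ell-1$ penultimate vertices $u_1,\dots,u_{\ell-1}$ carries \emph{at least} $t$ leaves. Embed $T'=T-\{\text{leaves of }u\}$ (a proper subtree, hence embeddable) and set $T''=T'-u$, $S^\diamond=N_D^\diamond(f(u))\cap f(T'')$. Because $D$ is $\mathcal C^*_{\le 2\ell}$-free, no two sibling leaves in $f(T'')$ can both lie in $S^+$; hence from each $f(u_i)$ one may delete $t-1$ leaf images without touching $S^+$, producing a subtree $T^+$ with at most $k-t-(\ell-1)(t-1)=k-\ell t+\ell-1$ vertices in which $S^+$ still has pairwise distance $\ge 2\ell-1$. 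Lemma~\ref{lem-1} then gives $|S^+|\le\lceil k/\ell\rceil - t$ (and symmetrically for $S^-$), leaving at least $t$ out-neighbours and $t$ in-neighbours of $f(u)$ outside $f(T'')$, enough to re-attach the $t$ leaves and contradict nonembeddability. The factor $(\ell-1)(t-1)$, gained by truncating the \emph{other} penultimate vertices' leaves, is exactly the slack your sketch lacks; without it the argument does not close.
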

\begin{proof}
Suppose to the contrary that $T$ contains at least $\ell$ penultimate vertices. Let $u$ be a penultimate vertex of $T$ such that $d_T(u)$ is as small as possible, and $d_u^+$ and $d_u^-$ be the number of leaves of $T$ in $N_T^+(u)$ and $N_T^-(u)$, respectively. Let $v_1, \ldots, v_t$ be the leaf neighbors of $u$. Clearly, $d_u^+ + d_u^-=t$. Let $u_1, \ldots, u_{\ell-1}$ denote $\ell-1$ penultimate vertices in $T$ other than $u$. By the choice of $u$, each $u_i$ is adjacent to at least $t$ leaves. By our assumption, there exists an embedding $f$ of $T' = T - \{v_1, \ldots, v_t\}$ in $D$. Let $T'' = T'-u$. Then $T''$ is a subtree of $T$ with at most $(k + 1) - (t + 1) = k - t$ vertices. Let $S^+=N_D^+(f(u))\cap f(T'')$ and $S^-=N_D^-(f(u))\cap f(T'')$. Since $D$ is $\mathcal{C}_{\le 2\ell}^*$-free, the distance in $f(T'')$ between any pair of vertices of $S^+$ is at least $2\ell-1\ge 3$. In particular, no two sibling leaves in $f(T'')$ can belong to $S^+$ at the same time. Hence, there is at most one leaf adjacent to $f(u_i)$ that can be a member of $S^+$, for each $i \in [\ell - 1]$. This means that for each $i \in [\ell - 1]$, we can delete $t-1$ leaves adjacent to $f(u_i)$ in $f(T'')$ without deleting any member of $S^+$. Denote the subtree of $f(T'')$ obtained in this way by $T^+$. Note that $|T^+|\le |f(T'')| - (\ell - 1)(t - 1) \leq (k - t) - (\ell- 1)(t - 1) = k - \ell t + (\ell - 1)$. The distance between every pair in $S^+$ remains the same in $T^+$ as in $f(T'')$. 
We may further assume that $|T^+| \geq 2\ell$.  Otherwise, $T^+$ has diameter at most $2\ell-2$ and $T$ would have diameter at most $2\ell-2+2=2\ell$. By Lemma \ref{lem-E}, we can embed $T$ into $D$, a contradiction.  Applying  Lemma \ref{lem-1} on $S^+$ in $T^+$, we have 
$$|S^+| \leq \max\left\{\left\lfloor \frac{|T^+|}{\ell} \right\rfloor, 1\right\} = \left\lfloor \frac{|T^+|}{\ell} \right\rfloor \leq \left\lfloor \frac{k - \ell t + (\ell - 1)}{\ell} \right\rfloor = \left\lceil \frac{k}{\ell} \right\rceil - t.$$ 
By the same arguments as above, we have
$$|S^-| \leq  \left\lceil \frac{k}{\ell} \right\rceil - t.$$
If $\delta^0(D) \geq \lceil \frac{k}{\ell} \rceil$, then $f(u)$ has at least $t$ out-neighbors and $t$ in-neighbors outside $f(T'')$.  If $\Bar{\delta}^0(D)\ge \lceil \frac{k}{\ell} \rceil$ and $T$ is antidirected, then $|N_D^\diamond(f(u))\setminus S^\diamond|\ge \max\{0,d_u^\diamond\}$ for $\diamond\in \{+,-\}$. In both cases, there exist two disjoint subsets $L^+\subseteq N^+_D(f(u))\setminus f(T'')$ and $L^-\subseteq N^-_D(f(u))\setminus f(T'')$ such that $|L^+|\ge d_u^+$ and $|L^-|\ge d_u^-$. Therefore, we can extend $f$ to an embedding of $T$ in $D$ by embedding $v_1, \ldots, v_t$ in $L^+\cup L^-$, a contradiction.
\end{proof}

\begin{lemma}\label{lem-p2}
Let $T$ be a minimal nonembeddable tree for some fixed $k,\ell$ and $D$. Let $x$ be a vertex in $T$ such that every nontrivial component in $T-x$ has at least $\ell$ vertices. Then $x$ is adjacent to no leaves of $T$ or at least two leaves of $T$. In particular, each penultimate vertex in $T$ is adjacent to at least two leaves. 
\end{lemma}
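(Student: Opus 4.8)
The plan is to prove the displayed assertion by contradiction and then to deduce the ``in particular'' clause by running the same argument on a penultimate vertex. So I would suppose $x$ is adjacent to exactly one leaf $w$ of $T$; by symmetry I may assume $w$ is an out-leaf, so $(x,w)\in E(T)$. Since $w$ is a leaf, $T-w$ is a proper subtree of $T$, so minimality gives an embedding $f$ of $T-w$ into $D$, and it then suffices to exhibit a vertex of $N_D^+(f(x))$ outside $f(T-w)$: assigning it to $w$ extends $f$ to an embedding of $T$, contradicting nonembeddability. Before that I would record two easy facts. First, $x$ is not a leaf of $T$ (otherwise $T$ is a single edge, which embeds in $D$ since $\delta^0(D)\ge 1$), so $\deg_T(x)\ge 2$ and $x$ has a non-leaf neighbour. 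Second, the nontrivial components of $T-x$ are precisely the components $C_1,\dots,C_p$ of $T-\{x,w\}$, the only trivial component of $T-x$ being $\{w\}$; thus $p\ge 1$, each $|C_i|\ge\ell$ by hypothesis, and $\sum_{i=1}^{p}|C_i|=|V(T)|-2\le k-1$.

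Next I would bound $S^+:=N_D^+(f(x))\cap f(T-w)$. Since $f(x)\notin S^+$ we have $S^+\subseteq f(C_1)\cup\cdots\cup f(C_p)$, so I would bound each $|S^+\cap f(C_i)|$ separately. The crucial observation is that any two distinct $z,z'\in S^+\cap f(C_i)$ satisfy $dist_{f(C_i)}(z,z')\ge 2\ell-1$: otherwise a path of length at most $2\ell-2$ joining $z$ and $z'$ inside $f(C_i)$, together with the arcs $(f(x),z)$ and $(f(x),z')$, would form a cycle of length between $3$ and $2\ell$ in $D$ that is not directed (its two arcs at $f(x)$ both leave $f(x)$), contradicting that $D$ is $\mathcal{C}_{\le 2\ell}$-free (resp.\ $\mathcal{C}_{\le 2\ell}^*$-free). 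Applying Lemma~\ref{lem-1} to $S^+\cap f(C_i)$ in the connected graph $f(C_i)$ and using $|C_i|\ge\ell$ gives $|S^+\cap f(C_i)|\le\lfloor|C_i|/\ell\rfloor$, hence
$$|S^+|\;=\;\sum_{i=1}^{p}|S^+\cap f(C_i)|\;\le\;\sum_{i=1}^{p}\left\lfloor\frac{|C_i|}{\ell}\right\rfloor\;\le\;\frac{1}{\ell}\sum_{i=1}^{p}|C_i|\;\le\;\frac{k-1}{\ell}\;<\;\frac{k}{\ell}.$$
Finally I would check $d_D^+(f(x))\ge k/\ell$: in the setting of Theorem~\ref{thm-2} this is immediate from $\delta^0(D)\ge k/\ell$, and in the antidirected setting of Theorem~\ref{thm-3} an in-neighbour of $x$ would, together with $(x,w)$, give a directed path of length $2$ (forbidden), so $x$ has an out-neighbour other than $w$, $f(x)$ has positive out-degree, and $d_D^+(f(x))\ge\Bar{\delta}^0(D)\ge k/\ell$. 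In both cases $|S^+|<d_D^+(f(x))$, which supplies the required out-neighbour and completes the contradiction.

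For the ``in particular'' clause I would take a penultimate vertex $u$: it lies in $S(T)$, so it is not a leaf of $T$, it has a unique non-leaf neighbour, and it has at least one leaf neighbour. Assuming for contradiction it has exactly one, say $w$, the tree $T-u$ has exactly one nontrivial component $B$, with $|B|=|V(T)|-2$. If $|B|\ge\ell$, the argument above applies verbatim with $B$ in the role of $C_1,\dots,C_p$. If $|B|<\ell$, then $|V(T)|\le\ell+1$, so $T$ has diameter at most $\ell\le 2\ell$ and maximum degree at most $\delta^0(D)$ (resp.\ $\Bar{\delta}^0(D)$), and Lemma~\ref{lem-E} embeds $T$ in $D$ — a contradiction; so $u$ has at least two leaf neighbours. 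The main obstacle, I expect, is the bookkeeping for the two parallel degree regimes — especially securing $d_D^+(f(x))\ge k/\ell$ in the pseudo-semidegree case, where the hypothesis bounds only \emph{positive} out-degrees — together with verifying that the short cycles produced are genuinely forbidden (i.e.\ non-directed) in the $\mathcal{C}_{\le 2\ell}^*$-free case; and in the ``in particular'' clause, the degenerate subcase $|B|<\ell$ is exactly why one must fall back on Lemma~\ref{lem-E} rather than on the counting.
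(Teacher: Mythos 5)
Your proof is correct and follows essentially the same route as the paper's: remove the unique leaf neighbour, embed the remainder, restrict attention to the nontrivial components, use the girth/$\mathcal{C}^*_{\le 2\ell}$-freeness to force pairwise distance $\ge 2\ell-1$ in the image of each component, apply Lemma~\ref{lem-1} component by component, sum to get $|S^+|\le\lfloor(k-1)/\ell\rfloor$, and compare with the degree lower bound. Two small remarks on where you deviate from, or slightly improve on, the paper's writeup. First, for the ``in particular'' clause the paper invokes Lemma~\ref{lem-E} to assume once and for all that $T$ has diameter at least $2\ell+1$, from which the unique nontrivial component of $T-x$ has diameter $\ge 2\ell-1$ and hence $\ge\ell$ vertices; your explicit case split on $|B|\ge\ell$ versus $|B|<\ell$ (falling back on Lemma~\ref{lem-E} in the degenerate case) amounts to the same thing. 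Second, you are more careful than the paper about the pseudo-semidegree regime: the paper simply writes $d_D^\diamond(f(x))\ge\lceil k/\ell\rceil$, whereas $\Bar\delta^0$ bounds only \emph{positive} in/out-degrees, so one must first note (as you do, via antidirectedness and the fact that $x$ is not a leaf) that $f(x)$ actually has an out-neighbour in $f(T-w)$ before invoking $\Bar\delta^0(D)\ge k/\ell$. That is a genuine, if minor, gap in the paper's exposition that your argument closes.
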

\begin{proof}
Suppose that $x$ is adjacent to exactly one leaf $v$ in $T$ and $T' = T - v$. By the minimality of $T$, there exists an embedding $f$ of $T'$ in $D$. Let $T_1, \dots, T_m$ be the components in $T' - x$ and $|T_i|=n_i$ for $1\le i\le m$. Since each $T_i$ is nontrivial, we have $n_i \geq \ell$. Let $S_i^\diamond=N_D^\diamond(f(x)) \cap f(T_i)$, where $\diamond\in \{+,-\}$ such that $v\in N_D^\diamond(x)$. Since $D$ is $\mathcal{C}_{\le 2\ell}^*$-free, every pair in $S_i^\diamond$ has a distance of at least $2\ell-1$ in $f(T_i)$. By Lemma \ref{lem-1}, we have $|S_i^\diamond| \leq \max\{\lfloor \frac{|f(T_i)|}{\ell} \rfloor, 1\} = \lfloor \frac{n_i}{\ell} \rfloor \leq \frac{n_i}{\ell}$. Therefore, 
$$|N_D^\diamond(f(x))\cap f(T' - x)|\le \left\lfloor \sum_{i = 1}^m \frac{n_i}{\ell} \right\rfloor = \left\lfloor \frac{|f(T' - x)|}{\ell} \right\rfloor \leq \left\lfloor \frac{k - 1}{\ell} \right\rfloor = \left\lceil \frac{k}{\ell} \right\rceil - 1.$$ 
Since $d_D^\diamond(f(x)) \geq \lceil \frac{k}{\ell} \rceil$, 
we can extend $f$ to an embedding of $T$ in $D$ by mapping $v$ to $N_D^\diamond(f(x))\setminus f(T' - x)$, a contradiction. This proves the first part of the conclusion.

Now, let $x$ be a penultimate vertex of $T$. By Lemma \ref{lem-E}, we may assume that $T$ has diameter at least $2\ell+1$, and $T-x$ consists of some isolated vertices and a component of diameter at least $2\ell-1$. Note that  such a component has at least $\ell$ vertices. By the first part of the conclusion, $x$ is adjacent to either none or  at least two leaves in $T$. Since $x$ is adjacent to at least one leaf, $x$ must be adjacent to at least two leaves.
\end{proof}

For an oriented tree $T$, a vertex $x$ in $S(T)$ and a component $A$ of $S(T)-x$, define the depth of $A$ from $x$ as $dep(x, A)=\max\{dist_{S(T)}(x, u)\mid u \in V(A)\}$, and the depth of $x$ as $dep(x)=\min\{dep(x,A)\mid A \text{ is a component of } S(T)-x\}$.

Given positive integers $a, b$ with $a \geq 2b$, let $T(a, b)$ denote a tree consisting of  a path $v_1v_2 \cdots v_{a-1}v_a$ by adding a leaf adjacent to each of $v_2, \dots, v_b, v_{a - b + 1},\ldots, v_{a - 1}$, as shown in Figure \ref{fig-1}. Clearly, $T(a,b)$ has $2b$ leaves.

\begin{figure}[htp]
\centering
\begin{tikzpicture}[
    scale=1.5
]

\draw (0,0) -- (1,0);
\draw (2,0) -- (3,0);
\draw (6,0) -- (7,0);
\draw (4,0) -- (5,0);
\draw[dotted, line width=1.1pt] (1,0) -- (2,0);
\draw[dotted, line width=1.1pt] (5,0) -- (6,0);
\draw[dotted, line width=1.1pt] (3,0) -- (4,0);

\coordinate (v1) at (0,0);
\coordinate (v2) at (1,0);
\coordinate (vb-1) at (2,0);
\coordinate (vb) at (3,0);
\coordinate (va-b+1) at (4,0);
\coordinate (va-b+2) at (5,0);
\coordinate (va-1) at (6,0);
\coordinate (va) at (7,0);

\foreach \i in {v1, v2, vb-1, vb, va-b+1,va-b+2, va-1, va} {
    \fill (\i) circle (2pt);
}

\foreach \i/\dir in {v2/above, vb-1/above,vb/above} {
    \fill (\i) ++(0,0.5) circle (2pt);
    \draw (\i) -- ++(0,0.5);
}

\foreach \i/\dir in {va-b+1/above, va-b+2/above,va-1/above} {
    \fill (\i) ++(0,0.5) circle (2pt);
    \draw (\i) -- ++(0,0.5);
}

\node[below=3pt] at (v1) {$v_1$};
\node[below=3pt] at (v2) {$v_2$};
\node[below=3pt] at (vb-1) {$v_{b-1}$};
\node[below=3pt] at (vb) {$v_b$};
\node[below=3pt] at (va-b+1) {$v_{a-b+1}$};
\node[below=3pt] at (va-b+2) {$v_{a-b+2}$};
\node[below=3pt] at (va-1) {$v_{a-1}$};
\node[below=3pt] at (va) {$v_a$};
\end{tikzpicture}
\caption{$T(a,b)$ with $a \geq 2b$.}
\label{fig-1}
\end{figure}
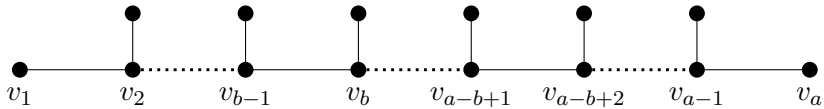

The following lemma can be extracted from \cite{Jiang}, which discusses the property of $dep(x)$ for some vertex $x$  of degree 2 in $S(T)$ of a tree $T$. Here, we include its proof for the sake of completeness, and more importantly, to use the structures presented in the proof to analyze the property of the minimal nonembeddable tree.
\begin{lemma}\label{lem-3}
Let $T$ be a tree with at most $\ell-1$ penultimate vertices. Suppose $S(T)$ has at least one vertex of degree 2, and $m=\min\{dep(x)\mid x$ is a vertex of degree $2$ in $S(T)\}$.  Then $m\le (\ell-1)/2$. Furthermore, if $m=(\ell-1)/2$, then $S(T)$ is a $T(r,m)$ for some $r\ge 2m$.    
\end{lemma}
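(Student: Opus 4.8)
The plan is to pass to the skeleton $S:=S(T)$, whose leaves are exactly the penultimate vertices of $T$, so that $S$ is a tree with at most $\ell-1$ leaves and at least one vertex of degree $2$. I would fix a degree-$2$ vertex $x$ of $S$ achieving $dep(x)=m$, write $A_1,A_2$ for the two components of $S-x$ with $dep(x,A_1)=m\le dep(x,A_2)=:d$ (so $d\ge m$), and then exhibit $2m$ distinct leaves of $S$, $m$ of them on each side of $x$; with $2m\le\ell-1$ this gives the bound, and analysing the equality case gives the structure.

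The two facts that drive the argument both come from the minimality of $m$. First, $A_1$ contains no degree-$2$ vertex of $S$: if $z\in A_1$ had degree $2$, the component $C$ of $S-z$ not containing $x$ would lie inside $A_1$, so every $v\in C$ would satisfy $dist_S(x,v)=dist_S(x,z)+dist_S(z,v)\le dep(x,A_1)=m$, whence $dep(z,C)\le m-dist_S(x,z)<m$, contradicting minimality. The same bookkeeping shows that every degree-$2$ vertex $z$ of $A_2$ has $dist_S(x,z)\le d-m$, since the component of $S-z$ containing $x$ already has depth at least $dist_S(z,x)+m$, forcing the remaining components to have depth at least $m$.

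To count the leaves, on the $A_1$ side I root $A_1$ at the neighbour $y_1$ of $x$; the case $m=1$ is immediate (then $A_1$ is a single leaf), so suppose $m\ge 2$. Then $y_1$ is not a leaf of $S$ and, having degree $\ne 2$, has at least two children, and likewise every internal vertex of $A_1$ has at least two children. Following a path $y_1=r_0,\dots,r_{m-1}$ to a leaf at depth $m-1$ and taking one leaf from an off-path subtree at each of $r_0,\dots,r_{m-2}$ gives $m$ leaves of $S$ in $A_1$. On the $A_2$ side I take a path $x=w_0,\dots,w_d$ with $w_d$ at depth $d$; then $w_d$ is a leaf of $S$, and for $d-m+1\le j\le d-1$ the vertex $w_j$ is neither a leaf nor (by the distance bound) a degree-$2$ vertex, hence has degree at least $3$ and carries a pendant subtree with a fresh leaf of $S$. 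This supplies $m$ more leaves, disjoint from those in $A_1$, so $S$ has at least $2m$ leaves and $m\le(\ell-1)/2$.

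When $m=(\ell-1)/2$ the tree $S$ has exactly $2m$ leaves, so every inequality above is tight, and I would push this through. On the $A_1$ side tightness forces each off-path subtree of $r_0,\dots,r_{m-2}$ to be a single vertex and each $r_i$ to have exactly two children, so $A_1\cup\{x\}$ is a path $x,r_0,\dots,r_{m-1}$ with one pendant leaf on each of $r_0,\dots,r_{m-2}$. On the $A_2$ side tightness, together with the distance bound (which forbids a degree-$2$ vertex inside a pendant subtree), forces each $w_j$ with $d-m+1\le j\le d-1$ to have degree exactly $3$ with a single pendant leaf and forces $w_1,\dots,w_{d-m}$ to have degree exactly $2$. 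Gluing these two caterpillars along $x$ yields a path on $m+d+1$ vertices carrying a pendant leaf at its $2$nd through $m$th vertex from one end and symmetrically from the other, i.e. $T(r,m)$ with $r=m+d+1\ge 2m+1$. The distance inequalities are short; the step I expect to need the most care is this endgame — extracting the full caterpillar shape from tightness and confirming the reassembled tree is literally $T(r,m)$, while keeping the degenerate cases ($m=1$, and $d=m$ where the degree-$2$ block is empty) in view.
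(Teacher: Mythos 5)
Your proposal is correct and takes essentially the same approach as the paper: both count at least $m$ leaves of $S(T)$ on each side of a minimizing degree-$2$ vertex $x$ by showing that the internal vertices near the far ends of the two components must have degree at least $3$ (your two structural facts about where degree-$2$ vertices can live are a clean restatement of the paper's observation that the first degree-$2$ vertex $u_p$ on the path satisfies $p\ge m$), and both then read off the caterpillar shape $T(m+d+1,m)$ from tightness.
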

\begin{proof}
Since $S(T)$ has at least one vertex of degree 2, we have $m\ge 1$. Let $x$ be any vertex of degree $2$ in $S(T)$ and $A_1,A_2$ be the two components in $S(T) - x$. Let $u_0\in A_1$ with $dist_{S(T)}(x,u_0)=dep(x,A_1)$
and $v_0\in A_2$ with $dist_{S(T)}(x,v_0)=dep(x,A_2)$, and 
 $P_1 = u_0 u_1 \cdots u_s$ and $P_2 = v_0 v_1 \cdots v_t$ be the $(u_0, x)$-path and the $(v_0, x)$-path in $S(T)$, respectively, where $u_s = v_t=x$.
    
    Let $p\le s$ denote the smallest subscript such that $u_p$ has degree $2$ in $S(T)$ and $q\le t$ denote the smallest subscript such that $v_q$ has degree $2$ in $S(T)$. Let $A_1'$ denote the component in $S(T)-u_p$ containing $u_0$ and $A_2'$ denote the component in $S(T) - v_q$ containing $v_0$. By the choice of $u_0,v_0$, we can see that $A_1'$ has depth $p$ from $u_p$, and $A_2'$ has depth $q$ from $v_q$. Hence, $p,q\ge m$. For $1\le i \le p-1$, $u_i$ has degree at least $3$ in $S(T)$ and thus leads to at least one leaf in $S(T)$, such a leaf lies in $A_1'$. Since $u_0$ is also a leaf in $S(T)$, $A_1'$ contains at least $p \geq m$ leaves of $S(T)$ and at least $(p - 1) + p = 2p - 1 \ge 2m - 1$ vertices.
    If $A_1'$ contains exactly $m$ leaves, then $p = m$ and $A_1'$ cannot contain any vertex having degree $2$ in $S(T)$, since such a vertex would have depth less than $p = m$. In that case, it is clear that $A_1'$ contains exactly the path $u_0 u_1 \cdots u_{m - 1}$ plus one leaf adjacent to each of $u_1, \dots, u_{m - 1}$. Similarly, $A_2'$ contains at least $q \geq m$ leaves of $S(T)$ and at least $2q - 1 \geq 2m - 1$ vertices. If $A_2'$ contains exactly $m$ leaves, then $q = m$ and $A_2'$ contains exactly the path $v_0 v_1 \cdots v_{m - 1}$, plus a leaf adjacent to each of $v_1, \dots, v_{m - 1}$. Hence, $S(T)$ contains at least $p + q \geq 2m$ leaves. Since $T$ contains at most $\ell-1$ penultimate vertices, $S(T)$ contains at most $\ell-1$ leaves. Thus, we have $2m \leq \ell-1$, that is, $m \leq (\ell-1)/2$. 
     
     If $m = (\ell - 1)/2$, then $S(T)$ contains exactly $2m$ leaves, which happens only if each of $A_1'$ and $A_2'$ contains exactly $m$ leaves of $S(T)$ and all the vertices in $S(T) - A_1' - A_2'$ have degree $2$ in $S(T)$. In such a case, $S(T)$ is a $T(s+t + 1, m)$.
\end{proof}
\begin{remark}\label{rmk-1}
In the proof of Lemma \ref{lem-3}, the two components $A_1, A_2$ of $S(T)-x$ correspond to the two nontrivial components $L(A_1)$ and $L(A_2)$ in $T-x$, because both $u_0$ and $v_0$ has neighbors in $T-S(T)$. 

If $T$ is a minimal nonembeddable tree for some fixed $k,\ell$ and $D$, then by Lemma \ref{lem-p2}, $u_0$ and $v_0$ are both adjacent to at least two leaves in $T$ since $u_0$ and $v_0$ are penultimate vertices in $T$. Therefore, $L(A_1)$ contains at least $2m - 1$ vertices in $A_{1,p}$ and at least two leaves in $T$ adjacent to $u_0$, and thus $L(A_1)$ contains at least $2m+1$ vertices. Similarly, $L(A_2)$ contains at least $2m + 1$ vertices. If $m = (\ell - 1)/2$, then each nontrivial component of $T-x$ has at least $2m + 1 = \ell$ vertices. Applying Lemma \ref{lem-p2} on $x$, $x$ is adjacent to either no leaves of $T$ or at least two leaves of $T$. By the arbitrariness of $x$, each vertex of degree $2$ in $S(T)$ is adjacent to either none or at least two leaves in $T$
\end{remark}

\section{Proofs of Theorems \ref{thm-2} and \ref{thm-3}}
Although Theorems \ref{thm-2} and \ref{thm-3} have different assumptions, their proofs are almost the same, so we merge the two proofs into one.

\begin{proof}[\bfseries{Proofs of Theorem \ref{thm-2} and \ref{thm-3}}]
Fix $k,\ell$, and a $\mathcal{C}_{\le 2\ell}$-free oriented graph (resp. $\mathcal{C}_{\le 2\ell}^*$-free digraph) $D$ and a minimum (pseudo) semidegree of at least $k/\ell$. 
Let $T$ be a minimal nonembeddable tree with $k$ edges and $\Delta(T)\le \delta^0(D)$ (resp. $\Delta(T)\le\Bar{\delta}^0(D)$ if $T$ is antidirected). By Lemma \ref{lem-E}, we may assume that the diameter of $T$ is  at least $2\ell + 1$

By Lemma \ref{lem-p1}, $S(T)$ contains at most $\ell-1$ leaves. If $S(T)$ does not contain any vertex of degree $2$, then by Lemma \ref{lem-2}, $S(T)$ contains at most $\ell-3$ nonleaf vertices.  Suppose the diameter of $S(T)$ is $d$, then note that any longest path in $S(T)$ has at least $d-1$ internal vertices of degree at least 3, we have
$d+1+d-1\le |S(T)|\le \ell-1+\ell-3$. Therefore, $d\le \ell-2$. It indicates that $S(T)$ has diameter at most $\ell-2$ and $T$ has diameter at most $\ell< 2\ell$, contradicting Lemma \ref{lem-E}. Hence, we may assume that $S(T)$ contains at least one vertex of degree 2. 

Now, let $m=\min\{dep(x)\mid x$ is a vertex of degree $2$ in $S(T)\}$ and $w$ be a vertex of degree $2$ in $S(T)$ with depth $m$. Denote by $A_1, A_2$ the two components of $S(T) - w$, where $A_1$ has depth $m$ from $w$. Let $w'\in A_1$ and $w''\in A_2$ be the two neighbors of $w$. The two components of $S(T) - ww'$ are $A_1$ and $A_2 \cup ww''$. So, the two components of $T - ww'$ are $L(A_1)$ and $L(A_2 \cup ww'')$. Let $T_1 = L(A_1) \cup ww'$ and $T_2 = L(A_2 \cup ww'') \cup ww'$, then $T_1$ and $T_2$ are proper subtrees of $T$ with $T_1 \cup T_2 = T$ and $T_1 \cap T_2 = w'w$. Furthermore, since $dist_{S(T)}(w', v) \le m - 1$ for any $v \in V(A_1)$, we have $dist_{T_1}(w', v) \le m$ for any $v \in V(T_1)$. In particular, this implies that the diameter of $T_1$ is at most $2m$.

Since each $T_i$ is a proper subtree of $T$ for $i=1,2$, it can be embedded in $D$. We will show that $T_1$ and $T_2$ can be appropriately embedded in $D$, such that the two embeddings together form an embedding of $T$ in $D$, which contradicts that $T$ is nonembeddable.

Consider an embedding $f$ from $T_2$ to $D$. Without loss of generality, we assume $w'\in N_T^+(w)$. Set $T_2'=L(A_2)$ and $H=f(T_2')$. We will define several subsets of $N_D^+(f(w))$ which count different types of out-neighbors of $f(w)$ in $D$. 
Let
\begin{equation*}
\begin{split}
N_1&=N_D^+(f(w))\cap V(H);\\
N_2&=\{u\in N_D^+(f(w))\setminus V(H)\mid dist_{D-f(w)}(u,V(H))\le m\};\\
N_3&=\{u\in N_D^+(f(w))\setminus V(H)\mid dist_{D-f(w)}(u,V(H))>m\}.
\end{split}
\end{equation*}
Clearly, $(N_1,N_2,N_3)$ is a partition of $N_D^+(f(w))$.

\begin{claim}\label{clm}
If $N_3\ne \emptyset$, then $T$ can be embedded in $D$.
\end{claim}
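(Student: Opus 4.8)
The plan is to relocate the vertex $w'$ of $T$ onto $u$ and then embed $T_1$ inside a small ball around $u$ that, thanks to $u\in N_3$, misses the image of $T_2$ except at $f(w)$. Write $x_0=f(w)$. First I would adjust the embedding $f$ of $T_2$ so that $f(w')=u$: in $T_2$ the vertex $w'$ is a pendant out-leaf at $w$, and so are all the out-leaves of $T$ at $w$, hence their $f$-images may be permuted freely among themselves; since $u\in N_D^+(x_0)\setminus V(H)$, after possibly interchanging the role of $w'$ with the pendant out-leaf of $w$ whose $f$-image equals $u$ (if such exists), we may reassign $w'$ to $u$. This keeps $f$ an embedding of $T_2$ and leaves $x_0=f(w)$ and $H=f(L(A_2))$ untouched. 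Next, because $T_1$ has diameter at most $2m$, where $m\le(\ell-1)/2$ by Lemma~\ref{lem-3}, so that $2m\le\ell-1<2\ell$, and since $\Delta(T_1)\le\Delta(T)\le\delta^0(D)$ (resp.\ $\Bar{\delta}^0(D)$), $(x_0,u)\in E(D)$ and $(w,w')\in E(T_1)$, Lemma~\ref{lem-E} yields an embedding $f_1$ of $T_1$ with $f_1(w)=x_0$ and $f_1(w')=u$.

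The key step is to check that $f_1(V(T_1)\setminus\{w,w'\})$ is disjoint from $f(V(T_2))$. Note $w$ has degree $1$ in $T_1$: its only $T_1$-neighbour is $w'$, because $L(A_1)$ does not contain $w$. Hence for each $v\in V(T_1)\setminus\{w\}$ the unique $(w',v)$-path in $T_1$ avoids $w$, so $dist_{T_1-w}(w',v)=dist_{T_1}(w',v)\le m$, and applying $f_1$ (whose image of $T_1-w$ does not contain $x_0$) we get that $f_1(v)$ lies within distance $m$ of $u$ in $D-x_0$. By the very definition of $N_3$, the ball of radius $m$ around $u$ in $D-x_0$ avoids $V(H)$. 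It also avoids the set $Z$ of $f$-images of the leaves of $T$ at $w$: if some $z\in Z$ were at distance $d'\le m$ from $u$ in $D-x_0$, then a shortest $(u,z)$-path together with the two edges at $x_0$ forms a cycle of length $d'+2$ with $3\le d'+2\le m+2\le 2\ell$, which is moreover non-directed in the antidirected case, since there $w$ is a source and both arcs point out of $x_0$ — contradicting that $D$ is $\mathcal{C}_{\le 2\ell}$-free (resp.\ $\mathcal{C}_{\le 2\ell}^*$-free). Thus $f_1(V(T_1)\setminus\{w,w'\})$ meets neither $V(H)\cup Z=f(V(T_2)\setminus\{w,w'\})$ nor $\{x_0,u\}=\{f(w),f(w')\}$.

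Finally, $f_1$ and the adjusted $f$ agree on $T_1\cap T_2=\{w,w'\}$, so their union is a well-defined map $V(T)\to V(D)$; it is injective by the previous step together with the injectivity of $f$ and $f_1$, and it preserves adjacency since every edge of $T$ lies in $T_1$ or $T_2$. Hence $T$ embeds in $D$, proving the claim. I expect the main obstacle to be the second paragraph: one must verify that the radius-$m$ ball around $u$ in $D-x_0$ avoids not only the bulk image $H$ (which is exactly what $u\in N_3$ guarantees) but also the few images $Z$ of the leaves incident to $w$, and the short-cycle argument handling $Z$ is precisely where the girth hypothesis and the bound $m\le(\ell-1)/2$ of Lemma~\ref{lem-3} are used; a secondary point of care is the relabelling needed to arrange $f(w')=u$ without disturbing $H$.
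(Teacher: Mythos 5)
Your proposal is correct and follows essentially the same route as the paper: relocate $f(w')$ to a vertex of $N_3$ by permuting the images of the out-leaves at $w$ in $T_2$ (using that $D$ is oriented, or $w$ is a source when $T$ is antidirected, to ensure any preimage of $u$ in $T_2\setminus T_2'$ is indeed an out-leaf at $w$), then invoke Lemma~\ref{lem-E} to embed $T_1$ with $w,w'$ pinned, and finally use the radius-$m$ ball around $u$ in $D-f(w)$ together with the definition of $N_3$ and the short-cycle argument to show the two embeddings overlap only at $\{f(w),f(w')\}$. The paper phrases the disjointness step as a proof by contradiction (take a hypothetical $z$ in the overlap and split into the cases $f^{-1}(z)$ a leaf at $w$ vs.\ $f^{-1}(z)\in T_2'$), while you phrase it forward as ``the ball misses $V(H)\cup Z$''; these are the same computation.
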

\begin{proof}
Suppose $N_3\ne\emptyset$. Because $D$ has girth $2\ell+1$ or $T$ is an antidirected tree and so does $T_2$,
$f^{-1}(u)$ is a out-leaf adjacent to $w$ in $T_2$ for any $u\in (N_2\cup N_3)\cap f(T_2)$. Rearranging the images of those out-leaves within $N_2 \cup N_3$ does not change the rest of the embedding of $T_2$ and the sets $N_1$, $N_2$ and $N_3$. Hence, we may assume without loss of generality that $f(w') \in N_3$. In other words, we may modify $f$ if necessary so that $w'$ is mapped to a vertex in $N_3$.

Recall that $T_1$ has diameter at most $2m$ and $S(T)$ has at most $\ell-1$ leaves, by Lemma \ref{lem-3}, $2m\le \ell-1<2\ell$.
By Lemma \ref{lem-E}, there exists an embedding $\phi$ of $T_1$ in $D$ such that $\phi(w') = f(w')$ and $\phi(w) = f(w)$. We will show that $V(\phi(T_1)) \cap V(f(T_2)) = \{ f(w'), f(w) \}$, which means that $\phi(T_1) \cup f(T_2)$ forms a copy of $T$ in $D$.

Suppose there exists a vertex $z \in \phi(T_1) \cap f(T_2)\setminus \{ f(w'), f(w) \}$. Let $x_1 = \phi^{-1}(z) \in V(T_1)\setminus \{ w', w \}$. By the definition of $T_1$, we have $dist_{T_1}(w', x_1) \le m$. Note that the unique $(w', x_1)$-path in $T_1$ does not use $w$. The image (under $\phi$) of that path in $\phi(T_1)$ is a $(\phi(w'), z)$-path of length at most $m$ that avoids using $\phi(w)=f(w)$. Hence, we have $dist_{D - f(w)}(\phi(w'), z) \le m$ or, equivalently, $dist_{D - f(w)}(f(w'), z) \le m$.

Let $x_2 = f^{-1}(z) \in V(T_2)\setminus \{ w', w \}$. If $x_2$ is a leaf in $T_2$ adjacent to $w$, then the cycle formed by the edges $zf(w)$, $f(w)f(w')$ and a shortest $(f(w'), z)$-path in $\phi(T_1)- f(w)$ has length between $3$ and $m + 2 \le 2\ell$, contradicting that $D$ is $\mathcal{C}_{\le 2\ell}$-free ($\mathcal{C}_{\le 2\ell}^*$-free if $T$ is antidirected). Hence $x_2 \in V(T_2')$, and so $z \in V(H)$. This implies $dist_{D - f(w)}(f(w'), z) \le m$, contradicting that $f(w') \in N_3$.
\end{proof}
It remains to show that $N_3 \neq \emptyset$. Since $d_D^+(f(w)) \ge \lceil \frac{k}{\ell} \rceil$ and $N_1, N_2, N_3$ are pairwise disjoint, it suffices to show that $|N_1 \cup N_2| \le \lceil \frac{k}{\ell} \rceil - 1$. Suppose $N_2 = \{a_1,\ldots, a_p\}$. By the definition of $N_2$, for each $i \in [p]$, there exists some $a_i^* \in V(H)$ such that $dist_{D - f(w)}(a_i, a_i^*) \leq m$. Let $N_2^* = \{a_1^*,\ldots, a_p^*\}$. We claim that $a_1^*,\ldots, a_p^* $ are all distinct and $N_2^* \cap N_1 = \emptyset$. Suppose that $ a_i^* = a_j^* $ for some $i \neq j$, then we would obtain a non-directed cycle of length between $4$ and $2m + 2\le 2\ell$ formed by the two edges $f(w)a_i $ and $f(w)a_j$, a shortest $(a_i, a_i^*)$-path and a shortest $(a_j, a_j^*)$-path in $D - f(w)$, a contradiction. Therefore, all $a_i^*$'s are distinct. If $a_i^* \in N_1$ for some $i$, then we can obtain a non-directed cycle of length between $3$ and $m + 2 \leq 2\ell$ formed by the two edges $f(w)a_i$, $f(w)a_i^*$,
and a shortest $(a_i, a_i^*)$-path in $D - f(w)$, again a contradiction. Hence $N_2^* \cap N_1 = \emptyset$.

Let $S = N_1 \cup N_2^*$, then $S\subseteq V(H)$ and $|S| = |N_1 \cup N_2^*| = |N_1| + |N_2^*| = |N_1| + |N_2| = |N_1 \cup N_2|$. So it suffices to show that $|S| \le \lceil \frac{k}{\ell} \rceil - 1$.

Recall the diameter of $T$ is  at least $2\ell + 1$, and the diameter of $T_1$ is at most $2m\le \ell-1$, we obtain that  the diameter of $T_2'$ is at least $\ell$.
It follows that $T_2'$ contains at least $\ell$ vertices. Since $L(A_1)$ contains at least $2m+1$ vertices by Remark \ref{rmk-1}, $T_2'$ contains at most $(k + 1) - (2m + 1) - 1 = k - 1 - 2m$ vertices. Note that $H=f(T_2')$, we can see that $\ell\le |H|\le k - 1 - 2m$.

Since $D$ is $\mathcal{C}_{\le 2\ell}^*$-free, we have
\begin{equation}\label{eq}
\begin{cases}
dist_H(u, v)\ge 2\ell-1 & \text{if } u, v \in N_1,\ u \neq v; \\
dist_H(u, v)\ge 2\ell- 2m- 1 & \text{if } u, v \in N_2^*,\ u \neq v; \\
dist_H(u, v)\ge 2\ell - m - 1 & \text{if } u \in N_1,\ v \in N_2^*.
\end{cases}
\end{equation}

For each vertex $x \in S = N_1 \cup N_2^*$, define $B(x)$ as follows.
\[
B(x) = \{ y \in V(H)\mid  dist_H(x, y) \le \ell - 1 \} \quad \text{if } x \in N_1,
\]
and
\[
B(x) = \{ y \in V(H) \mid  dist_H(x, y) \le \ell- m - 1 \} \quad \text{if } x \in N_2^*.
\]

By \eqref{eq} and the definitions, we have $B(x) \cap B(x') = \emptyset$ for distinct vertices $x, x' \in S$.

Since $H$ is connected and has at least $\ell$ vertices, we have 
$$|B(x)| \ge \min\{1 + (\ell- 1), |H|\} = \ell~\text{if}~x \in N_1$$ 
and 
$$|B(x)| \ge \min\{1 + (\ell - (m + 1)),|H|\} = \ell- m~\text{if}~x \in N_2^*.$$

We will show that $|B(x)| \ge \ell$ for all but at most two vertices in $N_2^*$, which implies 
$$k - 1 - 2m \ge |H| \ge |\cup_{x \in S} B(x)| = \sum_{x \in S} |B(x)| \ge (|S| - 2)\ell + 2(\ell - m),$$ 
and it follows that  $|S| \le \lfloor \frac{k - 1}{\ell} \rfloor = \lceil \frac{k}{\ell} \rceil - 1$ as required.
\vskip 2mm
By Lemma \ref{lem-3}, $m\le \frac{\ell-1}{2}$. We distinguish the following two cases separately.
\vskip 2mm
\noindent\textbf{Case 1.}  $m = \frac{\ell- 1}{2}$.
\vskip 2mm
In this case, $\ell- m - 1= m$. By Lemma \ref{lem-3} and Remark \ref{rmk-1}, $A_2$ consists of a path $v_0 v_1 \cdots v_t$, where $v_t = w''$ is the unique neighbor of $w$ in $A_2$, and a leaf $u_i$ adjacent to $v_i$ for each $i = 1,\ldots, m - 1$. Furthermore, each of $v_m,\ldots, v_t$ is a vertex of degree 2 in $S(T)$ and is adjacent to either none or at least two leaves in $T$.

Let $F$ denote the subgraph of $A_2$ induced by $v_0, v_1,\ldots, v_{m - 1}$ and $u_1, \ldots, u_{m - 1}$. Let $F^* = L(F)$. It is easy to check that $F^*$ (and hence $ f(F^*)$) cannot contain three vertices with pairwise distance at least $m + 2$. Because every pair in $N_2^*$ has distance (in $H$) at least $2\ell - 2m-1 \ge m + 2$, we have $|N_2^* \cap f(F^*)| \le 2$. So it suffices to show that $|B(x)| \ge \ell$ for $x \in N_2^* \setminus f(F^*)$. Such a vertex $x$ is either $f(v_j)$ or a leaf in $H$ adjacent to $f(v_j)$ for some $j \ge m$. Since $x\in N_2^*$, $f(v_t)=f(w'')\in N_D(f(w))$ ($f(v_t)\in N_D^+(f(w))$ if $T$ is antidirected) and $D$ is $C_{\le 2\ell}$-free ($C_{\le 2\ell}^*$-free if $T$ is antidirected), we have $dist_H(x,f(v_t)\ge 2\ell-m-1$. Therefore, $dist_H(f(v_j), f(v_t)) \ge dist_H(x, f(v_t)) - 1 \ge 2\ell - m - 2 > m$ and thus $j < t- m$.

If $x = f(v_j)$ for $m \le j < t - m $, then $f(v_{j - m}), \ldots, f(v_j), \ldots, f(v_{j + m}) \in B(x)$, since they have distance at most $m = \ell - m - 1$ from $x$ in $H$. Hence, $ |B(x)| \ge 2m + 1 = \ell $. If $x$ is a leaf adjacent to $f(v_j)$, then by Lemma \ref{lem-p2}, $v_j$ is adjacent to either none or at least two leaves in $T$, and so $x$ has a sibling leaf $x'$ adjacent to $f(v_j)$. Now $x, x', f(v_{j -m +1}), \ldots, f(v_{j + m - 1})\in B(x)$ and then $|B(x)|\ge 2m + 1 = \ell$.

Therefore, we have $|B(x)| \ge \ell$ for all but at most two vertices in $N_2^*$.
\vskip 2mm
\noindent\textbf{Case 2.} $m < \frac{\ell- 1}{2}$.
\vskip 2mm
In this case, we have $\ell- m -1 \ge m + 1$. We will prove that $|B(x)| \ge \ell$ for any $x \in N_2^*$. 
Since $f(w'') \in N_D(f(w))$ and $D$ is $C_{\le 2\ell}$-free ($f(w'') \in N_1$ if $T$ is antidirected), for any $x \in N_2^*$, we have $dist_H(x, f(w'')) \ge (2\ell+1) - (m + 2)> \ell-m-1$. Let $P = x_0 x_1 \cdots x_{\ell -m -1}$ be the initial portion of length $\ell - m - 1$ on the unique $(x, f(w''))$-path in $H$, where $x_0 = x$. The $\ell-m$ vertices on $P$ belong to $B(x)$. Hence, it suffices to show that $B(x)\setminus V(P)$ contains at least $m$ vertices.

Since $x_1, \ldots, x_{\ell - m -1}$ have degree at least 2 in $H$, $f^{-1}(x_1), \ldots, f^{-1}(x_{\ell - m - 1})\in S(T)$. Let $I = \{ i \in [\ell - m - 1] \mid f^{-1}(x_i) \text{ is a vertex of degree 2 in } S(T) \}$.

Let $j \in [\ell - m - 1] \backslash I$, then $f^{-1}(x_j)$ is either a leaf of $S(T)$ (which is possible only if $j = 1$ and $f^{-1}(x_0)$ is a leaf of $T$), or a vertex of degree at least three in $S(T)$. In the former case, by Lemma \ref{lem-p2}, $f^{-1}(x_1) $ is adjacent to at least two leaves of $T$. Therefore, $x_1$ is adjacent to at least one leaf $z_1\ne  x_0$ of $H$, and $z_1\in N_H(x_1)\setminus V(P)$. In the latter case, $f^{-1}(x_j)$ has a neighbor $v_j $ in $ S(T)$ which is not on $f^{-1}(P)$. If $v_j $ is not a leaf of $S(T)$, then it has a neighbor $v_j'$ in $S(T)$ not on $f^{-1}(P)$. If $v_j$ is a leaf of $S(T)$, then it is adjacent to some leaf $v_j'$ of $T$. Clearly, $v_j'$ is not on $f^{-1}(P)$. Let $z_j = f(v_j)$ and $z_j' = f(v_j')$. We have $z_j, z_j' \in V(H)\setminus V(P)$, $dist_H(z_j, x) = dist_H(x_j, x_0) + 1 = j + 1$, and $dist_H(z_j', x) = dist_H(x_j, x_0) + 2 = j + 2$.

If $[m] \cap I = \emptyset$, then by our discussion above, for each $j \in [m]$, $x_j$ has a neighbor $z_j$ in $H$ not on $P$ which is at a distance $j + 1 \le m + 1 \le \ell - m - 1$ from $x$. These $z_j$'s are distinct from each other. This yields that $|B(x)\setminus V(P)| \ge m$ and we are done.

Now, assume $[m]\cap I\ne \emptyset$ and $a = \min\{ i \mid i \in I \}$. Clearly, $a \le m$.
By the definition of $a$, $f^{-1}(x_a)$ is a vertex of degree 2 in $S(T)$. Let $A$ denote the component in $S(T) - f^{-1}(x_a)$ that doesn't contain $w$. Then $A$ is contained in $A_2 \subseteq T_2'$ and does not contain $f^{-1}(x_{a + 1}), \ldots, f^{-1}(x_{\ell - m - 1})$. By the definition of $m$, $A$ has a depth at least $m$ from $f^{-1}(x_a)$. Thus, there exists a path of length $m\le dep(x,A)$ from $f^{-1}(x_a)$ to a vertex in $A$. Such a path can be extended to a path of length $m + 1$ from $f^{-1}(x_a)$ to a vertex in $L(A)$. The image of that path, denoted by $Q$, is a path of length $m + 1$ in $H$ starting at $x_a$, ending at $y$, and avoiding $x_{a + 1}, \ldots, x_{\ell- m - 1}$. 

If $a=1$, we have $V(P) \cap V(Q) = \{ x_0, x_1 \}$. Each vertex in $V(Q)\setminus V(P)$ has distance at most $m + 1 \le \ell - m - 1$ from $x$. Hence $|B(x)\setminus V(P)|\ge |V(Q)\setminus V(P)|=m$, and we are done.

Suppose $2 \le a \le m$. By the definition of $a$, we have $x_1, \ldots, x_{a - 1} \notin I$. By the arguments before, $x_1$ has at least one neighbor $ z_1$ in $H$ not on $P$. If $a\ge 3$, then for each $j \in \{ 2, \ldots, a - 1 \}$, $x_j$ has a neighbor $z_j$ in $H$ not on $ P $ and $ z_j $ has a neighbor $z_j'$ in $H$ not on $ P $. Note that $z_j$ and $z_j'$ are at distance $ j + 1 $ and $j + 2$ from $x$, respectively. Since $j \le a - 1$, we have $j + 2 \le a + 1 \le m + 1 \le \ell - m -1$. Hence $z_1, z_2, z_2', \ldots, z_{a - 1}, z_{a - 1}' \in B(x) \setminus V(P)$.

Let $0 \le b \le a-1$ denote the smallest index such that $x_b$ lies on $Q$, then $P \cap Q = P[x_b, x_a]$. Recall that $Q$ has $m + 2$ vertices, so $Q-P$ has  $(m+2)-(a-b+1)=(m+1)-(a-b)$ vertices. Note that $P[x_0,x_b]\cup Q[x_b,y]$ is a path connecting $x_0$ and $y$, $Q-P$ has at least $\min\{m+1-b,m+1-(a-b)\}= (m + 1) - \max\{b, a - b \}$ vertices  with distance at most $m + 1 \le \ell - m - 1$ from $x$, and hence belong to $B(x)\setminus V(P)$.

If $b = 0$, then $z_1, z_2, z_2', \ldots, z_{a - 1}, z_{a - 1}'\notin V(Q)$. These $2a - 3$ vertices together with the first $(m + 1) - a$ vertices on $Q-P$ give $2a - 3 + (m + 1) -a = m +a - 2 \ge m$ vertices in $B(x)\setminus V(P)$. If $b = 1$, then $z_2, z_2', \ldots, z_{a - 1}, z_{a - 1}'\notin V(Q)$, so the first $(m + 1) - (a - 1)$ vertices on $Q-P$ together with $z_2, z_2', \ldots, z_{a - 1}, z_{a - 1}'$ give $(m + 1) - (a - 1) + 2(a - 2) = m +a - 2 \ge m$ vertices in $B(x)\setminus V(P)$.  So we may assume that $2 \le b \le a - 1$, which implies that $a \ge 3$. In this case, at most two of $z_1, z_2, z_2', \ldots, z_{a - 1}, z_{a - 1}'$ (namely, $z_b$ and $z_b'$) may lie on $Q$. Therefore, $2a - 5$ of those not in $V(Q)$, together with the first $(m + 1) - \max\{b, a - b\}$ vertices on $Q-P$, belong to $B(x)\setminus V(P)$. It follows that $|B(x)\setminus V(P)| \ge (2a - 5) + (m + 1) - \max\{b, a - b \}$. Since $(2a - 5) + (m + 1) - b = m + (a - 3) + (a - 1 - b) \ge m $ and $ (2a - 5) + (m + 1) - (a - b) = m + (a + b - 4) \ge m + (3 + 2 - 4) > m$, we conclude that $|B(x)\setminus V(P)| \ge m$. This completes the proof.
\end{proof}

\section{Concluding remarks}
In the proof of Theorem \ref{thm-2}, only Claim \ref{clm} required the host digraph $D$ to be oriented graph, that is, $D$ contains no bidirected edges. It may not be necessary to prohibit bidirected edges in $D$, so we have the following open question.
\begin{question}
Whether Theorem \ref{thm-2}  holds for the $\mathcal{C}_{\le 2\ell}$-free digraph.
\end{question}

Moreover, most of the arguments only required $D$ to be $\mathcal{C}_{\le 2\ell}^*$-free. Perhaps a more complicated structural analysis would allow us to relax the condition that $D$ is $\mathcal{C}_{2\ell}$-free.
\begin{question}
Whether Theorem \ref{thm-2} holds for the $\mathcal{C}_{\le 2\ell}^*$-free digraph.    
\end{question}
\section*{Declarations}
The authors declare that they have no known competing financial interests or personal relationships that could have appeared to influence the work reported in this paper.
\section*{\bf\Large Availability of Data and Materials}  
Not applicable.
\section*{Acknowledgments}
This research was supported by National Key R\&D Program of China under grant number 2024YFA1013900, NSFC under grant number 12471327.

{\small

}
\end{document}